\newcommand{\M}{\mathcal{M}}
\newcommand{\N}{\mathcal{N}}
\newcommand{\Smax}{\mathcal{S}^{max}}
\newcommand{\Smin}{\mathcal{S}^{min}}
\newtheorem{theorem}{Theorem}[section]
\newtheorem{lemma}[theorem]{Lemma}
\newtheorem{remark}[theorem]{Remark}
\newtheorem{corollary}[theorem]{Corollary}
\newtheorem{proposition}[theorem]{Proposition}
\DeclareSymbolFont{cyrletters}{OT2}{wncyr}{m}{n}
\DeclareMathSymbol{\Sha}{\mathalpha}{cyrletters}{"58}
\author[Bickel]{Kelly Bickel$^\dagger$}
\address{Department of Mathematics, Bucknell University, 360 Olin Science Building, Lewisburg, PA 17837, USA.}
\email{kelly.bickel@bucknell.edu}
\thanks{$\dagger$ Research supported in part by National Science Foundation
DMS grant \#1448846.}
\author[Liaw]{Constanze Liaw}
\address{Department of Mathematics, Baylor University, One Bear Place \#97328, Waco, TX 76798, USA.}
\email{Constanze$\underline{\,\,\,}$Liaw@baylor.edu}
\keywords{model spaces, two complex variables, compressed shift, Agler decomposition, essential normality}
 \subjclass[2010]{47A13, 47A20, 46E22}
\begin{document}

\title{Properties of Beurling-Type Submodules via Agler Decompositions}
\date{\today}

\maketitle
\begin{abstract}
In this paper, we study operator-theoretic properties of the compressed shift operators $S_{z_1}$ and $S_{z_2}$ on complements of submodules of the Hardy space over the bidisk $H^2(\mathbb{D}^2)$. Specifically, we study Beurling-type submodules -- namely submodules of the form $\theta H^2(\mathbb{D}^2)$ for $\theta$ inner -- using properties of Agler decompositions of $\theta$ to deduce properties of $S_{z_1}$ and $S_{z_2}$ on  model spaces $H^2(\mathbb{D}^2) \ominus \theta H^2(\mathbb{D}^2)$. Results include characterizations (in terms of $\theta$) of when a commutator $[S_{z_j}^*, S_{z_j}]$ has rank $n$ and when subspaces associated to Agler decompositions are reducing for $S_{z_1}$ and $S_{z_2}$.
We include several open questions.
\end{abstract}

\section{Introduction}
\subsection{Motivation} The Hardy space on the disk $H^2(\mathbb{D})$ has played a prominent role in developing both function and operator theory over the past century. Of particular importance are its shift-invariant subspaces, which (as proved by Beurling in \cite{b48}) are always of the form $\theta H^2(\mathbb{D})$ 
for an inner function $\theta.$ Indeed, the model theory of Sz.-Nagy-Foias \cite{SzNF2010} shows that every completely non unitary contraction is unitarily equivalent to the compression of multiplication by $z$ on some $\mathcal{K}_{\theta} \equiv H^2(\mathbb{D}) \ominus \theta H^2(\mathbb{D})$, as long as $\theta$ can be operator-valued. 

We are interested in generalizations of one variable Hardy space theory to the Hardy space over the bidisk $H^2(\mathbb{D}^2).$ Substantial progress in this direction has been made by W. Rudin, R.G.~Douglas, M. Gadadhar, R. Yang and many others \cite{dr00, dg03, dg08, int04, Rud69, Y01, Y02}, who often frame the important problems in terms of Hilbert submodules. In our situation, a Hilbert submodule $M$ in $H^2(\mathbb{D}^2)$ is a subspace that is invariant under 
 multiplication by the two independent variables $z_1$ and $z_2$, namely invariant under the Toeplitz operators $T_{z_1}, T_{z_2}$ \cite{dr00}.  We are interested in Beurling-type submodules, namely those of the form:
\[ M \equiv \theta H^2(\mathbb{D}^2),\]
where $\theta$ is inner. As shown by Mandrekar in \cite{M88}, these submodules are exactly the ones on which $T_{z_1}$ and $T_{z_2}$ are doubly commuting. In analogy with one variable model theory, given $\mathcal{K}_{\theta} \equiv H^2(\mathbb{D}^2) \ominus \theta H^2(\mathbb{D}^2),$ we are particularly interested in the compressed shift operators:
\[ 
S_{z_1} \equiv P_{\theta} T_{z_1}|_{\mathcal{K}_{\theta}} \ \text{ and } \  S_{z_2} \equiv P_{\theta} T_{z_2}|_{\mathcal{K}_{\theta}}, 
\]
where $P_{\theta}$ denotes the projection onto $\mathcal{K}_{\theta}$ and $\theta$ is inner. The case of general analytic contractions $\theta$ is quite involved even when we consider functions of only one complex variable. See for example \cite{RonRkN, LT}, which concerns Clark theory in the general situation, and the references therein. 

The literature already contains a variety of results concerning commutators of $S_{z_1}$, $S_{z_2}$ and their adjoints, as these operators are crucially related to both $\theta$ and the structure of $\mathcal{K}_{\theta}.$ For example, \cite{dr00,ii06,  int04, Y01, Y02} contain interesting results concerning the behaviors of the commutators
\[ [S_{z_1}, S^*_{z_2}] \text{ and } [S_{z_1}, S_{z_2} ].\]
However, the independent behavior of $S_{z_1}$ or $S_{z_2}$ is not completely understood. Some results exist concerning the essential spectrum of these operators under additional conditions \cite{Y02}, but in general, their operator theoretic properties and connections to $\theta$ are still mysterious.  The deepest such result (known by the authors) is due to Guo-Wang \cite{gw09}, which states:  $S_{z_1}$ and $S_{z_2}$ are both essentially normal, (i.e.~the commutators $[S^*_{z_1}, S_{z_1} ]$ and $[S_{z_2}^*, S_{z_2}]$
are both compact) if and only if $\theta$ is a rational inner function of degree at most $(1,1).$  Here, we say that the degree of $\theta $ is $(m_1,m_2)$, if we can write $\theta=p/q$ with polynomials $p$ and $q$ that share no common factor where $m_j$ is the maximum degree of $p$ and $q$ in $z_j$ for $j=1,2.$  In this paper, we study the individual behavior of $S_{z_1}$ and $S_{z_2}$ on $\mathcal{K}_{\theta}$ and in doing so, obtain a result distinct from, but complementary to, the Guo-Wang theorem. 

\subsection{Main Idea} Our method of approach is the following:~we disentangle the separate behaviors of $S_{z_1}$ and $S_{z_2}$ using canonical decompositions of $\mathcal{K}_{\theta}$ into $z_1$ and $z_2$ invariant subspaces. The existence of such decompositions follows immediately from the existence of \emph{Agler decompositions}. Specifically, in 1990 \cite{ag90}, J. Agler showed that every analytic contraction $\theta$
on the bidisk can be decomposed using two positive kernels $K_1, K_2: \mathbb{D}^2 \times \mathbb{D}^2 \rightarrow \mathbb{C}$ as follows:
\[ 1- \theta(z) \overline{\theta(w)} = (1-z_1 \bar{w}_1) K_2(z,w) + (1-z_2 \bar{w}_2)K_1(z,w). \]
 In \cite{ag1}, Agler used these kernels to generalize the classic Pick Interpolation Theorem to two variable and in the interim, this kernel formula has been used frequently to both generalize one variable results and address strictly multivariate questions on the polydisk as in \cite{agmc_isb, agmc_dv, mcc10a, amy10a, amy12, baltre98, kn07b}. 

In this paper, we study the connection between Agler kernels of $\theta$ and the operators $S_{z_1}, S_{z_2}$  on $\mathcal{K}_{\theta}$. Indeed, the question driving the majority of this paper is:
\begin{center} \emph{What do the Agler decompositions of $\theta$ imply about the operators $S_{z_1}$ and $S_{z_2}$ on $\mathcal{K}_{\theta}$?} \end{center}
Notice that the formula defining Agler decompositions can be rewritten as follows:
\begin{equation}
\label{eqn:akernels}
\frac{1-\theta(z) \overline{\theta(w)}}{(1-z_1 \bar{w}_1)(1-z_2 \bar{w}_2)} = \frac{K_1(z,w)}{1-z_1\bar{w}_1} + \frac{K_2(z,w)}{1-z_2 \bar{w}_2},
\end{equation} 
which is equivalent to a decomposition of $\mathcal{K}_{\theta} \equiv H^2(\mathbb{D}^2) \ominus \theta H^2(\mathbb{D}^2)$ into a $z_1$-invariant Hilbert space and a $z_2$-invariant Hilbert space; $\mathcal H\left(\frac{K_1(z,w)}{1-z_1\bar{w}_1}\right)$ and $\mathcal H\left(\frac{K_2(z,w)}{1-z_2\bar{w}_2}\right)$, respectively, where $\mathcal H(K)$ denotes the reproducing kernel Hilbert space with reproducing kernel $K$. We call these spaces \emph{Agler subspaces of $\theta$} and the pair $(K_1, K_2)$ \emph{Agler kernels of $\theta$}. Although these kernels (Hilbert spaces) are rarely unique, each $\theta$ does possess two canonical decompositions \cite{bk13}.

Namely, define $\Smax_1$ to be the largest subspace of $\mathcal{K}_{\theta}$ invariant under multiplication by $z_1$, set $\Smin_2 \equiv \mathcal{K}_{\theta} \ominus \Smax_1$, and define $\Smax_2$, $\Smin_1$ analogously. Then, the pairs $(\Smax_1, \Smin_2)$ and $(\Smin_1, \Smax_2)$ are Agler subspaces of $\theta$ and if we set
\[
\begin{aligned}
\mathcal H(K_j^{max}) &= \Smax_j \ominus z_j \Smax_j ; \\
\mathcal H(K_j^{min}) &= \Smin_j \ominus z_j \Smin_j, 
\end{aligned}
\]
then $(K^{max}_1, K^{min}_2)$ and $(K^{min}_1, K^{max}_2)$ are pairs of Agler kernels of $\theta.$ This construction first appeared in \cite{bsv05} and was further studied in \cite{b12, bk13}. Our investigations are motivated by the enlightening situation when $\theta$ is a product of one variable inner functions. In this case, we can derive exact formulas for $K^{max}_j$ and $K^{min}_j$, which allow us to deduce numerous properties about $S_{z_1}$ and $S_{z_2}$. Much of the paper involves the best-known generalizations of these results.


\subsection{Outline of Paper.}
In Section \ref{s-exa}, we restrict attention to $\theta$ that are products of one variable inner functions $\phi$ and $\psi$. 
While many computations and constructions can be done explicitly, this situation is quite non-trivial. We first obtain nice formulas for the Agler kernels $K^{max}_j$ and $K^{min}_j$, which allow us to get explicit formulas for the compressed shifts $S_{z_1}$ and $S_{z_2}$. 
A study of these formulas shows that the subspaces $(\Smax_1,\Smin_2)$ are reducing for $S_{z_1}$ and $(\Smin_1, \Smax_2)$ are reducing for $S_{z_2}.$ For details, see Proposition \ref{p-representation}. Further, interestingly, the essential normality of $S_{z_1}$ and $S_{z_2}$ has a simple characterization in terms of the structure of $\phi$ and $\psi$, see Proposition \ref{t-prodessnorm}. We are also able to study the spectrum of $S_{z_1}$ and $S_{z_2}$.  The results in this section provide motivation for the sections to come, where we obtain analogues of both the essential normality result and the reducing subspaces result for more general inner functions. 

First, in Section \ref{s-general}, we obtain the following generalization of Proposition \ref{t-prodessnorm}. The most surprising outcome is that our generalized arguments now characterize finite rank, rather than compactness, of the commutator. Specifically, we use Agler decompositions of $\theta$ to establish:
\begin{theorem}\label{t-IFF}
Let $\theta$ in $H^2(\mathbb{D}^2)$ be inner. Then the  commutator $[S_{z_1}^*,S_{z_1}]$ has rank $n$ if and only if $\theta$ is a rational inner function of degree $(1,n)$ or $(0,n)$.
\end{theorem}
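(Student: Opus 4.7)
The plan is to use the Agler decomposition $\mathcal{K}_\theta = \Smax_1 \oplus \Smin_2$ to split the analysis of $S_{z_1}$ into a piece where it is isometric and a piece where its failure to be isometric carries the entire obstruction. Since $\Smax_1$ is by definition the largest $z_1$-invariant subspace of $\mathcal{K}_\theta$, no projection is needed: $S_{z_1}|_{\Smax_1} = T_{z_1}|_{\Smax_1}$ is a true isometry, so $(I-S_{z_1}^*S_{z_1})|_{\Smax_1} = 0$, while $(I - S_{z_1}S_{z_1}^*)|_{\Smax_1}$ is the projection onto the wandering subspace $\mathcal H(K_1^{max}) = \Smax_1 \ominus z_1\Smax_1$. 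On $\Smin_2$, one has instead $(I-S_{z_1}^*S_{z_1})f = T_{z_1}^* P_{\theta H^2}(z_1 f)$ and $(I - S_{z_1}S_{z_1}^*)f = P_\theta(f(0,z_2))$. Writing $[S_{z_1}^*, S_{z_1}] = (I - S_{z_1}S_{z_1}^*) - (I - S_{z_1}^*S_{z_1})$ as the difference of two positive operators lets me read off the rank from these two ingredients.

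For the sufficiency direction, I would first dispatch the degree $(0,n)$ case: if $\theta(z) = \psi(z_2)$ for a finite Blaschke product $\psi$ of degree $n$, then $\mathcal{K}_\theta \cong H^2(\mathbb{D}_{z_1}) \otimes \mathcal{K}_\psi$ and $S_{z_1}$ factors as $T_{z_1}\otimes I_{\mathcal{K}_\psi}$, whence $[S_{z_1}^*,S_{z_1}]$ is the rank-one projection onto constants tensored with $I_{\mathcal{K}_\psi}$, giving rank $n$. For degree $(1,n)$, I would normalize $\theta(z_1,z_2) = (A(z_2) z_1 + B(z_2))/(D(z_2) + C(z_2)z_1)$, explicitly identify $\Smax_1$ and $\Smin_2$ using the canonical construction from \cite{bsv05,bk13}, compute the Agler kernels $K_1^{max}, K_1^{min}$, and verify that $[S_{z_1}^*, S_{z_1}]$ has rank exactly $n$. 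The model for intuition is Section~\ref{s-exa}, where these computations are done for products $\theta = \phi(z_1)\psi(z_2)$.

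For the necessity direction, assume $\mathrm{rank}[S_{z_1}^*,S_{z_1}]=n$. Because both $(I-S_{z_1}^*S_{z_1})$ and $(I-S_{z_1}S_{z_1}^*)$ are positive and their difference has finite rank $n$, their kernels agree off a common finite-dimensional subspace; combined with the explicit formula $(I-S_{z_1}S_{z_1}^*)f = P_\theta f(0,z_2)$, this forces $P_\theta\{g(z_2):g\in H^2(\mathbb{D})\}$ to be finite-dimensional. That finite-dimensionality produces a nontrivial polynomial relation expressing some $P_\theta z_2^N$ as a combination of lower powers, which I expect to translate directly into $\theta$ being rational with $z_2$-degree at most $n$. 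To get the $z_1$-degree bound, I would use that $(I-S_{z_1}^*S_{z_1})$ is supported on $\Smin_2$ and is controlled by $P_{\theta H^2}(z_1 f)$, and show that finite rank of this defect forces the $z_1$-variation of $\theta$ to be at most linear (otherwise the wandering of $S_{z_1}$ inside $\Smin_2$ contributes extra dimensions that cannot cancel in the commutator against $(I-S_{z_1}S_{z_1}^*)$).

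The main obstacle I anticipate is the $z_1$-degree bound. The subtlety is that in the commutator the two positive defect operators could conceivably cancel a lot of rank, so I cannot merely bound $\mathrm{rank}(I-S_{z_1}^*S_{z_1})$ and $\mathrm{rank}(I-S_{z_1}S_{z_1}^*)$ separately by $n$; I will need a structural argument showing that cancellation on $\Smax_1$ versus $\Smin_2$ is tightly constrained by the Agler decomposition, and that any higher $z_1$-degree for $\theta$ would produce genuinely orthogonal contributions to the positive and negative parts of $[S_{z_1}^*,S_{z_1}]$, destroying the rank-$n$ hypothesis. Reconciling this with the concrete $(0,n)$ and $(1,n)$ models should pin down the characterization.
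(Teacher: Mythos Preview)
Your framework has the right spirit---splitting along $\Smax_1 \oplus \Smin_2$ and writing the commutator as a difference of positive defects---but there is a genuine gap in both directions, and one of your structural claims is not correct in general.

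First, the claim that $(I-S_{z_1}S_{z_1}^*)|_{\Smax_1}$ is the projection onto the wandering subspace $\mathcal{H}(K_1^{max})$ presupposes that $\Smax_1$ is invariant under $S_{z_1}^*$. That holds precisely when $\Smax_1$ is reducing for $S_{z_1}$, which (as Section~\ref{s-reducing} shows) for rational inner $\theta$ happens only when $\theta$ is a product of one-variable inner functions. So your clean block-diagonal picture of the commutator collapses exactly when $\theta$ has genuine $z_1$-$z_2$ interaction, which is the interesting case. What survives is the formula $[S_{z_1}^*,S_{z_1}]f = P_\theta f(0,z_2)$ for $f\in\Smax_1$ (this is \eqref{e-commproj} in the paper), but the image need not lie back in $\Smax_1$, and there is cross-talk with $\Smin_2$ that your decomposition does not track.

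Second, the necessity argument for the $z_1$-degree is not merely an obstacle---you do not have a mechanism. The paper's route is quite different from your cancellation heuristic: it uses the explicit formula $S_{z_1}f(z)=z_1 f(z)-\langle f,\,(1-\bar z_2 w_2)^{-1}T_{\bar z_1}\theta\rangle\,\theta(z)$ (Lemma~\ref{lem:ShiftForm}) to compute $[S_{z_1}^*,S_{z_1}](p(z_2)f)$ for $f\in\mathcal{H}(K_2^{min})$ and a polynomial $p$ chosen to land in the kernel of the commutator. This produces an \emph{algebraic} relation $p(z_2)f(0,z_2)+q(z_2)T_{\bar z_1}\theta(z)+h(z_2)\theta(z)=0$, from which one solves explicitly for $\theta$ as a degree-one rational function in $z_1$. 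The restriction theorem (Theorem~\ref{thm:restriction}) then forces $\dim\mathcal{H}(K_2^{min})\le 1$. Your proposed approach of comparing ranks of the two positive defects does not access this algebraic structure, and ``orthogonal contributions to the positive and negative parts'' is not something you can read off without already knowing the spaces are reducing.

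For sufficiency in degree $(1,n)$, the paper likewise avoids a direct model computation. Instead it evaluates $[S_{z_1}^*,S_{z_1}]$ on the reproducing kernels $K_w$ of $\mathcal{K}_\theta$, splits $K_w=K_w^1+K_w^2$ along $\Smax_1\oplus\Smin_2$, and after substantial manipulation shows $[S_{z_1}^*,S_{z_1}]K_w$ always lands in the span of $n$ fixed functions $P_\theta\bigl(p(0,z_2)^{-1}T_{\bar z_1}f_i\bigr)$. Your plan to ``normalize and compute'' would work in principle, but the computation is not short, and the kernel approach is what makes it tractable.
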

Observe that this result complements the Guo and Wang result from \cite{gw09} discussed earlier.  In fact, Theorem \ref{t-IFF} together with Guo-Wang's result implies that if $S_{z_1}$ and $S_{z_2}$ are simultaneously essentially normal, then the two commutators are actually at most rank one!

Second, in Section \ref{s-reducing} we study when Agler subspaces are reducing for either of the compressed shifts. First, we determine conditions for the Agler subspaces 
\[ \mathcal H\left(\frac{K_1(z,w)}{1-z_1\bar{w}_1}\right) \text{ and }\, \mathcal H\left(\frac{K_2(z,w)}{1-z_2\bar{w}_2}\right)\]
 to be reducing for the compressed shift operators in terms of the kernels $K_1$ and $K_2$, see Theorem \ref{thm:Reducing1}. A subtle relationship (see Theorem \ref{thm:Reducing2}) between properties of the kernels and the properties of $\theta$ allows us to conclude that, if $\theta$ is rational inner, then the products in Section \ref{s-exa} are the only inner functions with Agler subspaces as reducing subspaces:
 
 \begin{theorem}
 \label{t-iff} Let $\theta$ be a rational inner function on $\mathbb{D}^2.$ Then $\theta$ has a pair of Agler kernels $(K_1, K_2)$ such that the associated Agler spaces
 \[ \mathcal{S}_1 \equiv \mathcal{H} \left(\frac{K_1(z,w)}{1-z_1\bar{w}_1} \right)  \text{ and }  \,\mathcal{S}_2 \equiv\mathcal{H} \left(\frac{K_2(z,w)}{1-z_2\bar{w}_2} \right)  \]
 are reducing subspaces for $S_{z_1}$ if and only if $\theta$ is a product of one variable inner functions.
 \end{theorem}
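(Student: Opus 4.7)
The ``if'' direction is established in Section \ref{s-exa}. When $\theta(z_1,z_2) = \phi(z_1)\psi(z_2)$ with $\phi,\psi$ inner in one variable, Proposition \ref{p-representation} already produces explicit Agler kernels whose associated Agler subspaces $\Smax_1$ and $\Smin_2$ are reducing for $S_{z_1}$. So the substance of the theorem is the converse, to which the rest of this plan is devoted.

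For the converse, suppose $\theta$ is rational inner and admits Agler kernels $(K_1,K_2)$ such that both $\mathcal{S}_1 = \mathcal{H}(K_1/(1-z_1\bar w_1))$ and $\mathcal{S}_2 = \mathcal{H}(K_2/(1-z_2\bar w_2))$ are reducing for $S_{z_1}$. The first step is to convert this operator-theoretic hypothesis into a structural statement about the kernels themselves via Theorem \ref{thm:Reducing1}, which by design characterizes reducing-ness intrinsically in terms of $K_1$ and $K_2$. The second step is to feed that kernel condition into Theorem \ref{thm:Reducing2}, which transfers kernel-level information to analytic information about $\theta$. The aim of this chain is to conclude that $\theta$ satisfies a decoupling between its $z_1$- and $z_2$-behavior, strong enough that $\theta$ can be separated as a function of $z_1$ times a function of $z_2$.

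The final step is to use rationality to promote this decoupling to an honest product factorization. Writing $\theta = \tilde{p}/p$ in lowest terms with $p$ non-vanishing on $\mathbb{D}^2$, the decoupling produced in the previous step should force $p(z_1,z_2) = p_1(z_1)\,p_2(z_2)$, and correspondingly $\tilde{p} = \tilde{p}_1 \tilde{p}_2$. Hence $\theta(z_1,z_2) = \phi(z_1)\psi(z_2)$ where $\phi = \tilde{p}_1/p_1$ and $\psi = \tilde{p}_2/p_2$ are one-variable finite Blaschke products, as required.

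The principal obstacle lies in the middle step: extracting from Theorems \ref{thm:Reducing1} and \ref{thm:Reducing2} a decoupling that is actually strong enough to detect factorization of $\theta$ itself. Reducing-ness of a single Agler space is a relatively weak hypothesis, and the argument must leverage having both $\mathcal{S}_1$ and $\mathcal{S}_2$ simultaneously reducing for the \emph{same} compressed shift, together with the Agler identity \eqref{eqn:akernels} linking them, in order to rule out non-product rational inner functions. Rationality enters crucially only at the very end, where it licenses the passage from an analytic decoupling property of $\theta$ to an algebraic factorization of the polynomials $p$ and $\tilde{p}$.
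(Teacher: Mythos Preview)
Your outline has the right overall architecture (Theorem \ref{thm:Reducing1} followed by Theorem \ref{thm:Reducing2}), but you have misidentified both where rationality enters and what remains to be done after Theorem \ref{thm:Reducing2}. Condition (b) of Theorem \ref{thm:Reducing2} has \emph{two} hypotheses: that $K_1$ depends only on $z_2,\bar w_2$, and the limit condition \eqref{eqn:lim} on $K_2$. Theorem \ref{thm:Reducing1} supplies the first of these, but says nothing about the second. Your plan simply ``feeds the kernel condition into Theorem \ref{thm:Reducing2}'' without verifying \eqref{eqn:lim}, so as written the implication $(b)\Rightarrow(a)$ cannot be invoked.

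This is exactly where rationality is used in the paper, and it is not at the end. One writes $\theta=\tilde p/p$ with $p$ having only finitely many zeros on $\mathbb{T}^2$, uses the fact that for rational inner $\theta$ the kernel $K_2$ is a finite sum $\sum q_i(z)\overline{q_i(w)}/(p(z)\overline{p(w)})$ with polynomial $q_i$, and then appeals to regularity of these rational functions at points $(\tau_1,z_2)$ with $\tau_1\in\mathbb{T}$, $z_2\in\mathbb{D}$ to force \eqref{eqn:lim}. Once \eqref{eqn:lim} holds, Theorem \ref{thm:Reducing2} gives the product factorization of $\theta$ outright; there is no residual ``final step'' in which one factors $p$ as $p_1(z_1)p_2(z_2)$. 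Your proposed third step is therefore superfluous, while the genuinely nontrivial use of rationality---establishing \eqref{eqn:lim}---is missing.
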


At the end of both Sections \ref{s-general} and \ref{s-reducing}, we include related open questions. The authors are currently investigating the situation where $\theta$ is matrix-valued. Results in this setting will appear in a later publication.

\textit{Acknowledgements.} The authors would like to thank R.G.~Douglas and R.~Yang for inspiring conversations. As this work was initiated at the Oberwolfach workshop ``Hilbert Modules and Complex Geometry,'' the authors would also like to thank the MFO (Oberwolfach) and the workshop organizers for providing a stimulating environment.

\section{A First Example}\label{s-exa}

In this section, we consider  $\theta(z) = \phi(z_1) \psi(z_2)$, for one variable inner functions $\phi$ and $\psi$, and  the compressed shift operators $S_{z_1}$ and $S_{z_2}$ on $\mathcal{K}_{\theta}.$ Even in this simple situation, there is much to be said. 

\subsection{Agler decompositions of $\theta$}
Before examining $S_{z_1}$ and $S_{z_2}$, we obtain nice formulas for the shift-invariant subspaces $\mathcal{S}^{max}_1$ and $\mathcal{S}^{min}_2.$ First, observe that by adding and subtracting $\psi(z_2) \overline{\psi(w_2)} $ in the numerator, one obtains:
\[ \frac{1- \theta(z)\overline{\theta(w)}}{(1-z_1\bar{w}_1)(1-z_2\bar{w}_2)} =
\frac{1 - \psi(z_2) \overline{ \psi(w_2)}}{(1-z_1\bar{w}_1)(1-z_2 \bar{w}_2)} 
+   \frac{ \psi(z_2) \overline{\phi(w_2)} (1 - \phi(z_1) \overline{ \phi(w_1)})}{(1-z_1\bar{w}_1)(1-z_2\bar{w}_2)}. \]
Both terms on the right-hand-side of the equation are positive kernels and indeed, it turns out that they are the reproducing kernels for $\mathcal{S}^{max}_1$ and $\mathcal{S}^{min}_2$ respectively. It suffices to prove the claim for $\mathcal{S}^{max}_1$, as $\mathcal{S}^{min}_2 = \mathcal{K}_{\theta} \ominus \mathcal{S}^{max}_1.$ Now, it is clear that:
\[  
\mathcal{H} \left( \frac{1 - \psi(z_2) \overline{ \psi(w_2)}}{(1-z_1\bar{w}_1)(1-z_2 \bar{w}_2)}\right) =H^2(\mathbb{D}^2) \ominus \psi(z_2) H^2(\mathbb{D}^2)
\]
is a subspace of $\mathcal{K}_{\theta}$ and by the $(1-z_1\bar{w}_1)$ in its kernel's denominator, is invariant under multiplication by $z_1.$ Hence, it is contained in $\mathcal{S}^{max}_1.$ To see equality, assume
\[ 
f \in \Smax_1  \ \text{ and } \  f \perp H^2(\mathbb{D}^2) \ominus \psi(z_2) H^2(\mathbb{D}^2).
\] 
Then $f= \psi(z_2)g$ for some $g \in H^2(\mathbb{D}^2)$. But, since $z_1^n f \in \Smax_1$ for all $n$, this would imply $\phi(z_1)\psi(z_2)g = \theta g \in \Smax_1 \subseteq \mathcal{K}_{\theta}.$ By the definition of $\mathcal{K}_{\theta}$, this implies $f \equiv 0$ and so, we have found $\Smax_1.$ Now, it is immediate that
\[ \Smin_2 = \mathcal{K}_{\theta} \ominus \Smax_1 = \mathcal{H}\left( \frac{  \psi(z_2) \overline{\psi(w_2)} \left(1 - \phi(z_1) \overline{ \phi(w_1)}\right)}{(1-z_1\bar{w}_1)(1-z_2 \bar{w}_2)}\right).\]
To simplify notation, define the one variable model spaces
\[ \mathcal{K}^2_{\psi} \equiv H_2^2(\mathbb{D}) \ominus \psi(z_2) H_2^2(\mathbb{D}) \ \text{ and } \ \mathcal{K}^1_{\phi} \equiv H_1^2(\mathbb{D}) \ominus \phi(z_1) H_1^2(\mathbb{D}),  \]
where $H^2_j(\mathbb{D})$ is the one variable Hardy space with independent variable $z_j$ for $j=1,2.$ Then,  if $(K^{max}_1, K^{min}_2)$ are the associated Agler kernels of $\theta$,
\[ 
\begin{aligned}
 \mathcal{H}(K^{max}_1) &= \Smax_1 \ominus z_1 \Smax_1 = H_2^2(\mathbb{D}) \ominus \psi(z_2) H_2^2(\mathbb{D})= \mathcal{K}^2_{\psi}; \\
 \mathcal{H}(K^{min}_2) &= \Smin_2 \ominus z_2 \Smin_2 = 
\psi(z_2) \left[ H^2_1(\mathbb{D}) \ominus \phi(z_1) H^2_1(\mathbb{D})\right] =  \psi(z_2) \mathcal{K}^1_{\phi}.
 \end{aligned}
 \]
Symmetric formulas for the subspaces $\mathcal{S}^{min}_1$, $\mathcal{S}^{max}_2$ and Agler kernels $(K^{min}_1, K^{max}_2)$ can be obtained in a similar fashion.  These decompositions of $\mathcal{K}_{\theta}$ into complementary spaces are summarized in the following proposition:
\begin{proposition}
If $\theta (z) = \phi(z_1) \psi(z_2)$, then
\[
\begin{aligned}
\mathcal{K}_{\theta} &= \mathcal{S}^{max}_1 \oplus \mathcal{S}^{min}_2 = \left[ \mathcal{K}^2_{\psi} \otimes   H^2_1(\mathbb{D})\right] \oplus  \left[ \mathcal{K}^1_{\phi} \otimes   \psi H^2_2(\mathbb{D})\right] \\
&= \mathcal{S}^{min}_1 \oplus \mathcal{S}^{max}_2 = \left[ \mathcal{K}^2_{\psi} \otimes  \phi H^2_1(\mathbb{D})\right] \oplus  \left[ \mathcal{K}^1_{\phi} \otimes   H^2_2(\mathbb{D})\right].
\end{aligned}
\]
\end{proposition}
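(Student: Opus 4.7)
The plan is to read the two decompositions directly off the reproducing-kernel identities already established in the discussion preceding the proposition, using the standard tensor product factorization $H^2(\mathbb{D}^2) = H^2_1(\mathbb{D}) \otimes H^2_2(\mathbb{D})$ and uniqueness of the reproducing kernel Hilbert space associated to a given positive kernel.

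First I would handle $\mathcal{S}^{max}_1$. The text already shows $\mathcal{S}^{max}_1 = H^2(\mathbb{D}^2)\ominus \psi(z_2)H^2(\mathbb{D}^2)$. Writing $H^2(\mathbb{D}^2)=H_1^2(\mathbb{D})\otimes H_2^2(\mathbb{D})$ and using that $\psi(z_2)H^2(\mathbb{D}^2)=H_1^2(\mathbb{D})\otimes \psi H_2^2(\mathbb{D})$, the orthogonal complement becomes $H_1^2(\mathbb{D})\otimes \mathcal{K}^2_\psi$, which after reordering tensor factors is $\mathcal{K}^2_\psi\otimes H_1^2(\mathbb{D})$, as claimed.

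Next I would handle $\mathcal{S}^{min}_2$. The reproducing kernel already identified for this space factors as
\[
\frac{\psi(z_2)\overline{\psi(w_2)}\bigl(1-\phi(z_1)\overline{\phi(w_1)}\bigr)}{(1-z_1\bar{w}_1)(1-z_2\bar{w}_2)} \;=\; \frac{1-\phi(z_1)\overline{\phi(w_1)}}{1-z_1\bar{w}_1}\cdot\frac{\psi(z_2)\overline{\psi(w_2)}}{1-z_2\bar{w}_2}.
\]
The first factor is the reproducing kernel of $\mathcal{K}^1_\phi$ and the second is the reproducing kernel of $\psi H_2^2(\mathbb{D})$. Since the RKHS associated to a tensor product of positive kernels is the Hilbert tensor product of the individual RKHSs, uniqueness of the kernel gives $\mathcal{S}^{min}_2 = \mathcal{K}^1_\phi \otimes \psi H_2^2(\mathbb{D})$. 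The identity $\mathcal{K}_\theta=\mathcal{S}^{max}_1\oplus\mathcal{S}^{min}_2$ is then the decomposition from \eqref{eqn:akernels} applied to the canonical pair $(K_1^{max},K_2^{min})$.

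The second line of the proposition is obtained by interchanging the roles of $z_1$ and $z_2$ (equivalently, the roles of $\phi$ and $\psi$): adding and subtracting $\phi(z_1)\overline{\phi(w_1)}$ in the numerator of $1-\theta(z)\overline{\theta(w)}$ and repeating the argument identifies $\mathcal{S}^{max}_2=\mathcal{K}^1_\phi\otimes H_2^2(\mathbb{D})$ and $\mathcal{S}^{min}_1=\mathcal{K}^2_\psi\otimes \phi H_1^2(\mathbb{D})$. There is no real obstacle in this argument; the only care required is in the bookkeeping of tensor factors and in verifying that the kernel factorization genuinely identifies the RKHS (not merely a dense subspace), which follows from the standard uniqueness theorem for reproducing kernels.
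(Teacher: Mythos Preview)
Your proposal is correct and matches the paper's approach: the proposition is stated there as a summary of the preceding discussion, which establishes exactly the kernel identities you invoke, and the tensor identifications you spell out are precisely what the paper leaves implicit (it records $\mathcal{H}(K^{max}_1)=\mathcal{K}^2_\psi$ and $\mathcal{H}(K^{min}_2)=\psi(z_2)\mathcal{K}^1_\phi$ and then passes to $\mathcal{S}^{max}_1$, $\mathcal{S}^{min}_2$). Your direct kernel-factorization for $\mathcal{S}^{min}_2$ is a minor variant of the paper's route through $\mathcal{H}(K^{min}_2)$, but the content is the same.
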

This result proves useful when studying the operators of interest on $\mathcal{K}_{\theta}.$

\subsection{The compression of the shift operators on $\mathcal{K}_{\theta}$}\label{ss-prodreducing}
In the product setting the compressed shift operators $S_{z_1} = P_{\theta} T_{z_1}|_{\mathcal{K}_{\theta}}$ and $S_{z_2} = P_{\theta} T_{z_2}|_{\mathcal{K}_{\theta}}$ have a special form.

\begin{proposition}\label{p-representation}
If $\theta (z) = \phi(z_1) \psi(z_2)$, then
\[ S_{z_1} = \left[ \begin{array}{cc} T_{z_1}|_{\Smax_1} & 0\\
0 & P_{\mathcal{K}^1_{\phi}} T_{z_1} \otimes I_{\psi H^2_2(\mathbb{D})} \end{array} \right ]: 
\left[ \begin{array}{r}
\Smax_1 \\ \Smin_2
\end{array} \right]
\rightarrow 
\left[ \begin{array}{r}
\Smax_1 \\ \Smin_2 
\end{array} \right]
 \]
and
\[ S_{z_2} = \left[ \begin{array}{cc} P_{\mathcal{K}^2_{\psi}} T_{z_2} \otimes I_{\phi H^2_1(\mathbb{D})} & 0\\
0 & T_{z_2}|_{\Smax_2}   \end{array} \right ]: 
\left[ \begin{array}{r}
\Smin_1 \\ \Smax_2
\end{array} \right]
\rightarrow 
\left[ \begin{array}{r}
\Smin_1 \\ \Smax_2 
\end{array} \right].
 \]
In particular, the spaces $\Smax_1$, $\Smin_2$ are reducing for $S_{z_1}$ and the spaces $\mathcal{S}^{min}_1$, $\mathcal{S}^{max}_2$ are reducing for $S_{z_2}$. 
\end{proposition}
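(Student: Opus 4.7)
The plan is to leverage the concrete tensor-product decomposition of $\mathcal{K}_{\theta}$ established in the preceding proposition and then verify the block forms by computing each block directly. Since the decomposition $\mathcal{K}_{\theta} = \Smax_1 \oplus \Smin_2$ is orthogonal, showing both formulas for $S_{z_1}$ reduces to proving that each summand is invariant under $S_{z_1}$ and identifying the restriction. I would handle $S_{z_2}$ by a symmetric argument at the end, using the other decomposition $\mathcal{K}_{\theta} = \Smin_1 \oplus \Smax_2$.

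The top-left block is essentially free: by the very definition of $\Smax_1$ as the largest $z_1$-invariant subspace of $\mathcal{K}_{\theta}$, we have $T_{z_1}\Smax_1 \subseteq \Smax_1 \subseteq \mathcal{K}_{\theta}$, so $P_{\theta}$ acts as the identity there and $S_{z_1}|_{\Smax_1} = T_{z_1}|_{\Smax_1}$. This also shows there is no $\Smax_1 \to \Smin_2$ component.

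The substantive step is computing $S_{z_1}$ on $\Smin_2 = \mathcal{K}^1_{\phi} \otimes \psi H^2_2(\mathbb{D})$. I would take a general elementary tensor $f(z_1)\psi(z_2)g(z_2)$ with $f \in \mathcal{K}^1_{\phi}$ and $g \in H^2_2(\mathbb{D})$, apply $T_{z_1}$, and perform the one-variable orthogonal decomposition
\[
z_1 f(z_1) = \bigl(P_{\mathcal{K}^1_{\phi}} T_{z_1} f\bigr)(z_1) + \phi(z_1) h(z_1)
\]
for some $h \in H^2_1(\mathbb{D})$. Multiplying through by $\psi(z_2)g(z_2)$, the second piece becomes $\phi(z_1)\psi(z_2)\,h(z_1)g(z_2) = \theta(z)\,h(z_1)g(z_2) \in \theta H^2(\mathbb{D}^2)$, which is annihilated by $P_{\theta}$. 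What survives is $(P_{\mathcal{K}^1_{\phi}} T_{z_1} f)(z_1)\,\psi(z_2)g(z_2) \in \mathcal{K}^1_{\phi} \otimes \psi H^2_2(\mathbb{D}) = \Smin_2$, so $\Smin_2$ is $S_{z_1}$-invariant and the restriction is exactly $P_{\mathcal{K}^1_{\phi}} T_{z_1} \otimes I_{\psi H^2_2(\mathbb{D})}$ on elementary tensors; a density argument extends this to all of $\Smin_2$. Combining with the first step, both $\Smax_1$ and $\Smin_2$ are invariant under $S_{z_1}$, and since they are orthogonal complements in $\mathcal{K}_{\theta}$, each is reducing.

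The potentially subtle point — really the only thing that requires care — is the orthogonal splitting of $z_1 f$ inside $H^2_1(\mathbb{D})$ and verifying that the piece landing in $\phi H^2_1$ combines with $\psi(z_2)g(z_2)$ to give something in $\theta H^2(\mathbb{D}^2)$; this is where the product structure $\theta = \phi\psi$ is used decisively, and it is precisely what lets the projection $P_{\theta}$ decouple into a one-variable model-space projection tensored with the identity. The corresponding formulas for $S_{z_2}$ and the reducing property of $\Smin_1, \Smax_2$ are obtained by interchanging the roles of the two variables and repeating the argument verbatim.
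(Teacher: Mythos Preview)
Your proposal is correct and follows essentially the same approach as the paper: both arguments decompose with respect to $\Smax_1 \oplus \Smin_2$, use the $z_1$-invariance of $\Smax_1$ for the top-left block, and on $\Smin_2$ split $z_1 f(z_1)$ via the one-variable model space $\mathcal{K}^1_\phi$ to identify the restriction as $P_{\mathcal{K}^1_\phi}T_{z_1}\otimes I$. Your organization is slightly more economical: by observing directly that the $\phi h$-piece multiplied by $\psi g$ lands in $\theta H^2(\mathbb{D}^2)$ and is therefore killed by $P_\theta$, you compute $S_{z_1}|_{\Smin_2}$ in one step, whereas the paper separately verifies that $P_{\Smax_1}T_{z_1}P_{\Smin_2}=0$ (using $T_{z_1}f\in\psi H^2$) and then computes $P_{\Smin_2}T_{z_1}P_{\Smin_2}$.
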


\begin{proof}
Consider $S_{z_1}$ on $\mathcal{K}_{\theta}.$ For ease of notation, we write $\M \equiv \Smax_1$ and $\N \equiv \Smin_2.$ Then we can write $S_{z_1}$ as a block operator:
\[ S_{z_1} = \left[ \begin{array}{rr} P_{\M} T_{z_1} P_{\M} & P_{\M} T_{z_1} P_{\N} \\
P_{\N} T_{z_1} P_{\M} & P_{\N} T_{z_1} P_{\N} \end{array} \right ]: 
\left[ \begin{array}{r}
\M \\ \N 
\end{array} \right]
\rightarrow 
\left[ \begin{array}{r}
\M \\ \N 
\end{array} \right].
 \]
Now, we simply study each of these operators separately. First since $\M$ is invariant under multiplication by $z_1,$ we have
\[ P_{\M} T_{z_1} P_{\M} = T_{z_1}|_\M \ \text{ and } \ P_{\N} T_{z_1} P_{\M} =0.\]
Furthermore, observe that, if $f \in \N$, then $f = \psi(z_2)g(z)$ for some $g \in H^2(\mathbb{D}^2) \ominus \phi(z_1) H^2(\mathbb{D}^2) .$ Then
\[ T_{z_1} f(z) = \psi(z_2) z_1 g(z) \ \text{ is in } \ \psi(z_2) H^2(\mathbb{D}^2).\]
 Since $\M = H^2(\mathbb{D}^2) \ominus \psi(z_2) H^2( \mathbb{D}^2)$, we can conclude
\[ T_{z_1} f \perp \M  \ \text{ which implies }  \ P_{\M} T_{z_1} P_{\N} =0. \]
Lastly, recall that $\N = \mathcal{K}^1_{\phi} \otimes \psi(z_2)H^2_2(\mathbb{D}).$ Then, fix $(f(z_1) \otimes \psi(z_2) g(z_2) )$ in $\N$ and observe that
\[ 
\begin{aligned}
T_{z_1}(f(z_1) \otimes \psi(z_2) g(z_2)) &= (T_{z_1}f(z_1) \otimes \psi(z_2) g(z_2)) \\
& = (P_{\mathcal{K}^1_{\phi}} T_{z_1}f(z_1) \otimes \psi(z_2) g(z_2))  + 
(P_{\phi H^2_1(\mathbb{D})} T_{z_1}f(z_1) \otimes \psi(z_2) g(z_2)).
\end{aligned}
\]
It is easy to show that the second piece of the sum is orthogonal to $\N$, so 
\[ P_{\N} T_{z_1}(f(z_1) \otimes \psi(z_2) g(z_2)) = (P_{\mathcal{K}^1_{\phi}} T_{z_1}f(z_1) \otimes \psi(z_2) g(z_2)).\] 
Since linear combinations of elements of the form $(f(z_1) \otimes \psi(z_2) g(z_2) )$ are dense in $\N$, we have
\[ P_{\N} T_{z_1} P_{\N} = P_{\mathcal{K}^1_{\phi}} T_{z_1} \otimes I_{\psi H^2_2(\mathbb{D})}.\]
The formula for $S_{z_2}$ holds by analogy and the third statement follows immediately from the expressions for $S_{z_1}$ and $S_{z_2}$.
\end{proof}

\subsection{Characterizing essential normality}\label{ss-exa-en}
In this particular situation, it is not hard to study the essential normality of $S_{z_1}$ and similarly, of $S_{z_2}.$  The result is the following:

\begin{proposition}\label{t-prodessnorm} If $\theta(z) = \phi(z_1) \psi(z_2)$, then
$S_{z_1}$ is essentially normal iff $\psi$ is a finite Blaschke product and $\phi$ is a single Blaschke factor. Similarly, $S_{z_2}$ is essentially normal iff $\phi$ is a finite Blaschke product and $\psi$ is a single Blaschke factor. 
\end{proposition}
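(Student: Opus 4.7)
The plan is to exploit the block-diagonal form of $S_{z_1}$ provided by Proposition \ref{p-representation}. Since $S_{z_1}$ decomposes as a direct sum with respect to $\Smax_1 \oplus \Smin_2$, its self-commutator $[S_{z_1}^*, S_{z_1}]$ splits as the direct sum of the self-commutators of the two diagonal blocks. Consequently, $S_{z_1}$ is essentially normal if and only if each of these two commutators is compact, and I would analyze each block separately.

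For the first block $T_{z_1}|_{\Smax_1}$ on $\Smax_1 = \mathcal{K}^2_\psi \otimes H^2_1(\mathbb{D})$, multiplication by $z_1$ acts only on the second tensor factor, so the block equals $I_{\mathcal{K}^2_\psi} \otimes T_{z_1}$. Its self-commutator is therefore $I_{\mathcal{K}^2_\psi} \otimes [T_{z_1}^*, T_{z_1}]$, and since $[T_{z_1}^*, T_{z_1}]$ is the rank-one projection onto the constants in $H^2_1(\mathbb{D})$, this tensor product is compact if and only if $\dim \mathcal{K}^2_\psi < \infty$, equivalently if and only if $\psi$ is a finite Blaschke product.

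For the second block $P_{\mathcal{K}^1_\phi} T_{z_1} \otimes I_{\psi H^2_2(\mathbb{D})}$, I would identify $P_{\mathcal{K}^1_\phi} T_{z_1}$ with the one-variable compressed shift $S_\phi$ on $\mathcal{K}^1_\phi$, so the block is $S_\phi \otimes I$ acting on $\mathcal{K}^1_\phi \otimes \psi H^2_2(\mathbb{D})$. The self-commutator is $[S_\phi^*, S_\phi] \otimes I$. Because $\psi H^2_2(\mathbb{D})$ is infinite-dimensional (multiplication by $\psi$ is an isometry from $H^2_2(\mathbb{D})$ onto it), the operator $A \otimes I$ is compact only when $A = 0$; hence compactness of this block commutator is equivalent to $S_\phi$ being normal on $\mathcal{K}^1_\phi$. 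To finish, I would invoke the classical one-variable fact that the compressed shift $S_\phi$ is normal if and only if $\mathcal{K}^1_\phi$ is at most one-dimensional, i.e., $\phi$ is a single Blaschke factor; a short argument is that the defects $I - S_\phi S_\phi^*$ and $I - S_\phi^* S_\phi$ are rank-one projections whose ranges coincide only when $\dim \mathcal{K}^1_\phi \leq 1$. Combining the two requirements yields the characterization of essential normality for $S_{z_1}$, and the statement for $S_{z_2}$ follows by interchanging the roles of $z_1, z_2$ and $\phi, \psi$.

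The main obstacle is the last step: arguing that $[S_\phi^*, S_\phi] \otimes I$ compact forces $[S_\phi^*, S_\phi] = 0$ (which relies on the infinite-dimensionality of $\psi H^2_2(\mathbb{D})$) and then establishing the one-variable characterization of normal compressed shifts. Neither point is difficult, but each requires some care to justify cleanly.
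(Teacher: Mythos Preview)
Your proposal is correct and follows essentially the same approach as the paper: split the self-commutator along the reducing decomposition $\Smax_1 \oplus \Smin_2$, identify the first block's commutator with $I_{\mathcal{K}^2_\psi}$ tensored against the rank-one projection onto constants, and reduce the second block to the question of when the one-variable compressed shift $S_\phi$ is normal. The only substantive difference is in this last step: the paper computes $[S_\phi^*,S_\phi]f$ explicitly, plugs in $f=T_{\bar z_1}\phi$, and solves the resulting functional equation for $\phi$, whereas you argue via the defect operators. One small correction: $I-S_\phi S_\phi^*$ and $I-S_\phi^*S_\phi$ are rank-one positive operators but not projections in general; your argument still goes through because their ranges are spanned by $1-\overline{\phi(0)}\phi$ and $T_{\bar z_1}\phi$ respectively, these two vectors have the same norm $\sqrt{1-|\phi(0)|^2}$, and linear dependence between them forces $\phi$ to be M\"obius exactly as in the paper's computation.
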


\begin{proof} To show $S_{z_1}$ is essentially normal, we need to show $[S^*_{z_1}, S_{z_1}]$ is compact. As $\Smax_1, \Smin_2$ are reducing for $S_{z_1},$ the commutator $[S^*_{z_1}, S_{z_1}]$ is compact iff both 
\[ [S^*_{z_1}, S_{z_1}] |_{\Smax_1} \text{ and } [S^*_{z_1}, S_{z_1}] |_{\Smin_2} \]
are compact. Thus, we can study those restricted operators separately. 

\noindent \textbf{Component 1: $[S^*_{z_1}, S_{z_1}] |_{\Smax_1}.$} Using the formulas from Proposition \ref{p-representation}, we have:
\[ 
S_{z_1} |_{\Smax_1} = I_{\mathcal{K}^2_{\psi}} \otimes T_{z_1}|_{H^2_1(\mathbb{D})} \text{ and } 
\left( S_{z_1} |_{\Smax_1} \right)^*=I_{\mathcal{K}^2_{\psi}} \otimes T_{\bar{z}_1}|_{H^2_1(\mathbb{D})}.
\]
Then, since $\Smax_1$ and $\Smin_2$ are reducing for $S_{z_1},$ we can conclude:
\[
\begin{aligned}
\left [S^*_{z_1}, S_{z_1} \right ] |_{\Smax_1} &= \left( S_{z_1} |_{\Smax_1} \right)^* \left( S_{z_1}|_{\Smax_1} \right) - \left(S_{z_1}|_{\Smax_1} \right) \left( S_{z_1} |_{\Smax_1} \right)^* \\
 &= \left[  I_{\mathcal{K}^2_{\psi}} \otimes T_{\bar{z}_1}|_{H^2_1(\mathbb{D})}\right] \left[ I_{\mathcal{K}^2_{\psi}} \otimes T_{z_1}|_{H^2_1(\mathbb{D})} \right] - \left[ I_{\mathcal{K}^2_{\psi}} \otimes T_{z_1}|_{H^2_1(\mathbb{D})} \right] \left[ I_{\mathcal{K}^2_{\psi}} \otimes T_{\bar{z}_1}|_{H^2_1(\mathbb{D})} \right] \\
&=  I_{\mathcal{K}^2_{\psi}} \otimes  \left[ T_{\bar{z}_1} T_{z_1} - T_{z_1} T_{\bar{z}_1} \right].
\end{aligned}
\]
Now fix $f \in H^2_1(\mathbb{D})$. Then
\[ 
\left[ T_{\bar{z}_1} T_{z_1} - T_{z_1} T_{\bar{z}_1} \right] f = f - \left[ f - f(0) \right ] = f(0).\]
So, letting $M_{\lambda}$ denote point evaluation at $\lambda$ on $H^2_1(\mathbb{D})$, we have
\[
\left [S^*_{z_1}, S_{z_1} \right ] |_{\Smax_1}   = I_{\mathcal{K}^2_{\psi}} \otimes M_0. 
\]
Now, we show this operator is compact iff $\psi$ is a finite Blaschke product. First, if $\psi$ is not a finite Blaschke product, then $\mathcal{K}^2_{\psi}$ is infinite dimensional and one can choose an infinite orthonormal basis $\{ f_n\}$ of $\mathcal{K}^2_{\psi}$. Then the sequence $\{ f_n \otimes 1 \}$ is bounded in $\Smax_1$ but the sequence
\[
\{ \left [S^*_{z_1}, S_{z_1} \right ] |_{\Smax_1} (f_n \otimes 1) \} = \{ f_n \otimes 1 \}
\]
does not have a convergent subsequence. Similarly, if $\psi$ is a finite Blaschke product, then $\mathcal{K}^2_{\psi}$ is finite dimensional. This implies $\left [S^*_{z_1}, S_{z_1} \right ] |_{\Smax_1}$ is finite rank and hence, compact. \\

\noindent \textbf{Component 2:  $[S^*_{z_1}, S_{z_1}] |_{\Smin_2}.$ } Using the formulas from Proposition \ref{p-representation}, we have  
\[  
S_{z_1}|_{\Smin_2} = P_{\mathcal{K}^1_{\phi}} T_{z_1} \otimes I_{\psi H^2_2(\mathbb{D})} \text{ and }  \left( S_{z_1}|_{\Smin_2} \right)^* = T_{\bar{z}_1}|_{\mathcal{K}^1_{\phi}}  \otimes I_{\psi H^2_2(\mathbb{D})},  \]
where the last formula uses the fact that $\mathcal{K}^1_{\phi}$ is invariant under $T_{\bar{z}_1}$.
Again, since $\Smin_2$ is reducing for $S_{z_1}$, we can write:
\[ 
\begin{aligned}
\left [S^*_{z_1}, S_{z_1} \right] |_{\Smin_2}
 &=  \left( S_{z_1} |_{\Smin_2} \right)^* \left( S_{z_1}|_{\Smin_2} \right) - \left(S_{z_1}|_{\Smin_2} \right) \left( S_{z_1} |_{\Smin_2} \right)^* \\
 & =\left( T_{\bar{z}_1}|_{\mathcal{K}^1_{\phi}}  \otimes I_{\psi H^2_2(\mathbb{D})} \right) \left(P_{\mathcal{K}^1_{\phi}} T_{z_1} \otimes I_{\psi H^2_2(\mathbb{D})}\right) -\left(T_{\bar{z}_1}|_{\mathcal{K}^1_{\phi}}  \otimes I_{\psi H^2_2(\mathbb{D})} \right) \left(
 P_{\mathcal{K}^1_{\phi}} T_{z_1} \otimes I_{\psi H^2_2(\mathbb{D})})\right)  \\
 & = \left[ \left(T_{\bar{z}_1}|_{\mathcal{K}^1_{\phi}} \right) \left(P_{\mathcal{K}^1_{\phi}} T_{z_1} \right) - \left(P_{\mathcal{K}^1_{\phi}} T_{z_1}  \right) \left(T_{\bar{z}_1}|_{\mathcal{K}^1_{\phi}}  \right)\right ] \otimes I_{\psi H^2_2(\mathbb{D})}. 
 \end{aligned}
 \]
Moreover, by Theorem  (II-9) in \cite{sar94}, for each $f \in \mathcal{K}^1_{\phi}$, 
\[  \left( T_{\bar{z}_1}|_{\mathcal{K}^1_{\phi}} \right)^* f = M_{z_1} f - \left \langle f, T_{\bar{z}_1} \phi \right \rangle_{H^2_1(\mathbb{D})} \phi,\]
which, by uniqueness,  implies that
\[  P_{\mathcal{K}^1_{\phi}} T_{z_1} f= M_{z_1} f - \left \langle f, T_{\bar{z}_1} \phi \right \rangle_{H^2_1(\mathbb{D})} \phi. \]
Now, we show that $\left [S^*_{z_1}, S_{z_1} \right] |_{\Smin_2}$ is compact iff
\[ C \equiv \left(T_{\bar{z}_1}|_{\mathcal{K}^1_{\phi}} \right) \left( P_{\mathcal{K}^1_{\phi}} T_{z_1} \right) - \left(P_{\mathcal{K}^1_{\phi}} T_{z_1} \right) \left( T_{\bar{z}_1}|_{\mathcal{K}^1_{\phi}} \right) \equiv 0. \]
If $C \equiv 0,$ then $\left [S^*_{z_1}, S_{z_1} \right] |_{\Smin_2} \equiv 0$, and so is clearly compact. Now assume $C$ is nonzero. Specifically, assume there is some function $g$,  such that $Cg = h$ for some nonzero function $h$. Choose a sequence of orthonormal vectors $\{f_n\}$ in $\psi H^2_2(\mathbb{D}).$ Then the sequence $\{ g \otimes f_n \}$ is bounded in $\Smin_2$ but 
\[ \left \{ \left [S^*_{z_1}, S_{z_1} \right] |_{\Smin_2}( g \otimes f_n) \right\} = \{ h \otimes f_n \} \]
does not have a convergent subsequence, so the operator is not compact. To finish the characterization, fix $f \in \mathcal{K}^1_{\phi}.$ Then
\[
\left( T_{\bar{z}_1}|_{\mathcal{K}^1_{\phi}} \right) \left( P_{\mathcal{K}^1_{\phi}} T_{z_1} \right) f = T_{\bar{z}_1} \left( z_1 f  - \left \langle f, T_{\bar{z}_1} \phi \right \rangle_{H^2_1(\mathbb{D})} \phi \right) \\
 = f -  \left \langle f, T_{\bar{z}_1} \phi \right \rangle_{H^2_1(\mathbb{D})} T_{\bar{z}_1} \phi.
\]
Similarly, we have
\[ 
 \left( P_{\mathcal{K}^1_{\phi}}  T_{z_1} \right) \left( T_{\bar{z}_1}|_{\mathcal{K}^1_{\phi}} \right) f = f - f(0) - \left \langle T_{\bar{z}_1} f, T_{\bar{z}_1}\phi \right \rangle_{H^2_1(\mathbb{D})} \phi. \] 
So, it follows that
\[ Cf = f(0) - \left \langle T_{\bar{z}_1} f, T_{\bar{z}_1}\phi \right \rangle_{H^2_1(\mathbb{D})} \phi + \left \langle f, T_{\bar{z}_1} \phi \right \rangle_{H^2_1(\mathbb{D})} T_{\bar{z}_1} \phi. \]
Now we show that $C \equiv 0$ iff $\phi$ is a single Blaschke factor.   Now, if $\phi$ is a single Blaschke factor, then  $\mathcal{K}^1_{\phi}$ is a one-dimensional space. This implies that all linear operators on $\mathcal{K}^1_{\phi}$ are trivially normal, so $C \equiv 0.$ Now assume $C \equiv 0,$ and trivially, $\phi$ is not a constant function. Since $f =T_{\bar{z}_1} \phi \in \mathcal{K}^1_{\phi}$ is nonzero, $C \equiv 0$ implies there are constants $a, b,c$ with $c \ne 0$ such that
\[ 0= a - b \phi + c T_{\bar{z}_1} \phi\]
which implies
\[T_{\bar{z}_1} \phi (z_1) = \frac{1}{c} \left( b\phi(z_1) -a \right).\]
By definition, we also know:
\begin{equation} \label{eqn:bws} T_{\bar{z}_1}\phi(z_1)  = \frac{\phi(z_1) - \phi(0)}{z_1}. \end{equation}
Setting these two equations equal and solving for $\phi$ gives:
\[  \phi(z_1) = \frac{c \phi(0)- az_1}{c - bz_1}. \]
Since $\phi$ is an inner function, this implies $\phi$ is a single Blaschke factor. Thus, $[S^*_{z_1},S_{z_1}]|_{\Smin_2}$ is compact iff $C \equiv 0$, which is true iff $\phi$ is a single Blaschke factor.

Combining the conditions for  $[S^*_{z_1},S_{z_1}]|_{\Smax_1} $ and $[S^*_{z_1},S_{z_1}]|_{\Smin_2} $ to be compact, we get that $[S^*_{z_1},S_{z_1}]$ is compact iff $\psi$ is a finite Blaschke product and $\phi$ is a single Blaschke factor. \end{proof}

The following considerations yield an easy corollary to the previous proof. Specifically, the proof showed that if $S_{z_1}$ is essentially normal, then
\[[S^*_{z_1}, S_{z_1}] = \left( I_{\mathcal{K}^2_{\psi}} \otimes M_0 \right) |_{\Smax_1} \oplus 0|_{\Smin_2}. \]
where $\psi$ is a finite Blaschke product and so, $\mathcal{K}^2_\psi$ is a finite dimensional vector space. This immediately gives:

\begin{corollary}\label{c-essnorm}
Assume $\theta(z) = \phi(z_1)\psi(z_2)$, for $\phi$ and $\psi$ one variable and inner.  Then the essential normality of $S_{z_1}$ on $\mathcal{K}_{\theta}$ implies that $\textrm{rank}\,[S_{z_1}^*,S_{z_1}]<\infty$.
\end{corollary}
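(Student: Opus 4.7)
The plan is to read off the rank of $[S^*_{z_1},S_{z_1}]$ directly from the computations already performed in the proof of Proposition \ref{t-prodessnorm}. First I would invoke Proposition \ref{t-prodessnorm}: essential normality of $S_{z_1}$ forces $\psi$ to be a finite Blaschke product (so $\mathcal{K}^2_\psi$ is finite-dimensional) and $\phi$ to be a single Blaschke factor (so $\mathcal{K}^1_\phi$ is one-dimensional).

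Next, because $\Smax_1$ and $\Smin_2$ are reducing for $S_{z_1}$ by Proposition \ref{p-representation}, the commutator splits as
\[
[S^*_{z_1},S_{z_1}] = [S^*_{z_1},S_{z_1}]|_{\Smax_1} \,\oplus\, [S^*_{z_1},S_{z_1}]|_{\Smin_2},
\]
so it suffices to bound the rank of each summand. Component 1 of the proof of Proposition \ref{t-prodessnorm} identifies the first summand with $I_{\mathcal{K}^2_\psi} \otimes M_0$; since $M_0 \colon f \mapsto f(0)$ is rank one on $H^2_1(\mathbb{D})$, the rank of this tensor product equals $\dim \mathcal{K}^2_\psi$, which is finite by the previous paragraph. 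Component 2 of the same proof writes the second summand as $C \otimes I_{\psi H^2_2(\mathbb{D})}$ and then verifies that $C \equiv 0$ precisely when $\phi$ is a single Blaschke factor — which is the present case — so this summand vanishes identically.

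Combining these, $\mathrm{rank}\,[S^*_{z_1},S_{z_1}] = \dim \mathcal{K}^2_\psi < \infty$, which gives the corollary. Since every required identity has already been extracted inside the proof of Proposition \ref{t-prodessnorm}, there is no substantive obstacle here: the argument is bookkeeping of ranks through the reducing decomposition, plus the observation that the quantitative dimension count in Component 1 was already implicit in the compactness argument but was not recorded there.
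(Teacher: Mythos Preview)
Your proposal is correct and follows exactly the paper's approach: the paper records, from the computations in the proof of Proposition \ref{t-prodessnorm}, that essential normality yields $[S^*_{z_1},S_{z_1}] = (I_{\mathcal{K}^2_\psi}\otimes M_0)\oplus 0$ with $\mathcal{K}^2_\psi$ finite-dimensional, and concludes finite rank. Your write-up is slightly more explicit about the rank count but is otherwise the same argument.
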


\subsection{Operator-theoretic and spectral properties}\label{ss-speccomm}
In this subsection, we make the common assumption that $\phi$ and $\psi $ are contractions, i.e., $|\phi(0)|<1$ and $|\psi(0)|<1$. If both functions are pure, namely $| \phi(0)| = |\psi(0)|=1$, then $\theta$ is a rotation and $\mathcal{K}_{\theta}$ is trivial. If only one of the functions is not pure, then the problem still simplifies, just not as drastically. This situation is addressed in the remark following the proof of Proposition \ref{p-spectrum}.

In the situation, using the representation formulas of the compressed shift operators $S_{z_1}$ and $S_{z_2}$ given in Proposition \ref{p-representation}, we obtain the following operator-theoretic and spectral properties:

\noindent\begin{proposition}\label{p-spectrum}
Assume $\theta (z) = \phi(z_1) \psi(z_2)$ and that $\phi$ and $\psi$ are one variable inner contractions. Then:
\begin{itemize}
\item[(a)] The first component of $S_{z_1}$ is an isometry and the second component is cnu (i.e., a completely non-unitary contraction).
\item[(b)] The spectrum of $S_{z_1}$
$
\sigma\left(S_{z_1} \right) = \overline{\mathbb{D}^2}. 
$
More precisely, 
\[
\sigma\left(S_{z_1}|_{\Smax_1}\right)=\sigma_{ac}\left(S_{z_1}|_{\Smax_1}\right)=\overline{\mathbb{D}^2}
\quad\text{and}
\quad
\sigma\left(S_{z_1}|_{\Smin_2}\right)=\sigma_{ess}\left(S_{z_1}|_{\Smin_2}\right)=\sigma(\phi).
\]
\item[(c)] The commutator $[S_{z_1}^*,S_{z_1}]|_{\Smax_1}$ has eigenvalue 1 with multiplicity equal to the dimension of $\mathcal{K}_\psi^2$. In particular, the multiplicity is finite if and only if $\psi$ is a finite Blaschke product. Further, $[S_{z_1}^*,S_{z_1}]|_{\Smin_2}$ has three eigenvalues of infinite multiplicity.\end{itemize}
Operator $S_{z_2}$ possesses the analogous properties.
\end{proposition}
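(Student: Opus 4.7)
The plan is to exploit the block-diagonal representation of $S_{z_1}$ from Proposition~\ref{p-representation} and reduce every claim to a single-variable fact applied to one of the two reducing summands
\[
S_{z_1}|_{\Smax_1} = I_{\mathcal{K}^2_\psi} \otimes T_{z_1}|_{H^2_1(\mathbb{D})}, \qquad S_{z_1}|_{\Smin_2} = S_\phi \otimes I_{\psi H^2_2(\mathbb{D})},
\]
where $S_\phi = P_{\mathcal{K}^1_\phi}T_{z_1}|_{\mathcal{K}^1_\phi}$ is the usual one-variable Sz.-Nagy--Foias model operator.

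For (a), the first summand is the tensor product of an identity with the unilateral shift, hence an isometry. For the second, the hypothesis $|\phi(0)|<1$ places $S_\phi$ in the $C_{00}$ class of the Sz.-Nagy--Foias theory, so $(S_\phi\otimes I)^n = S_\phi^n\otimes I \to 0$ strongly; strong convergence of the powers to zero precludes any unitary direct summand and makes $S_\phi\otimes I$ cnu.

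For (b), I would combine $\sigma(T_{z_1})=\overline{\mathbb{D}}$ with $\sigma(A\otimes I)=\sigma(A)$ to pin down the spectrum of the first summand; the absolutely continuous character follows from the Wold decomposition, since the pure unilateral shift has minimal unitary extension the bilateral shift, whose spectrum is purely Lebesgue on $\mathbb{T}$. For the second summand I would use the classical $\sigma(S_\phi)=\sigma(\phi)$ together with the Fredholm observation that, since $\dim\psi H^2_2(\mathbb{D})=\infty$, the operator $S_\phi\otimes I - \lam$ is Fredholm precisely when $S_\phi-\lam$ is invertible, giving $\sigma_{ess}(S_\phi\otimes I)=\sigma(\phi)$. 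For (c), the commutator formulas already obtained in the proof of Proposition~\ref{t-prodessnorm} yield $[S^*_{z_1},S_{z_1}]|_{\Smax_1} = I_{\mathcal{K}^2_\psi}\otimes M_0$, with $M_0$ the rank-one projection onto the constants of $H^2_1(\mathbb{D})$; the eigenspace for eigenvalue $1$ is therefore $\mathcal{K}^2_\psi\otimes\mathbb{C}$, of dimension $\dim\mathcal{K}^2_\psi$. On $\Smin_2$ the commutator equals $C\otimes I_{\psi H^2_2(\mathbb{D})}$ where $C = (I-S_\phi S_\phi^*) - (I-S_\phi^*S_\phi)$ is a self-adjoint operator on $\mathcal{K}^1_\phi$ of rank at most two, its defects being the rank-one projections characteristic of a scalar model operator; the three eigenvalues of $C$ --- the two nonzero values on its range and the eigenvalue $0$ on the orthogonal complement --- each acquire infinite multiplicity after tensoring with the infinite-dimensional identity on $\psi H^2_2(\mathbb{D})$.

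The main obstacle will be the spectral identifications in (b): running the Fredholm-theoretic argument that identifies $\sigma_{ess}(S_\phi\otimes I)$ with $\sigma(S_\phi)$, and pinning down the correct meaning of $\sigma_{ac}$ for the non-normal isometry $I\otimes T_{z_1}$. The remaining algebraic manipulations in (a) and (c) are essentially immediate from the formulas already recorded in Propositions~\ref{p-representation} and~\ref{t-prodessnorm}.
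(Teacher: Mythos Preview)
Your proposal is correct and follows essentially the same route as the paper: both reduce each claim to the block-diagonal form of Proposition~\ref{p-representation} and then invoke standard one-variable model-theoretic facts on each summand, with the commutator formulas from the proof of Proposition~\ref{t-prodessnorm} driving part~(c). The only minor divergences are that in~(b) the paper argues the essential spectrum of $S_\phi\otimes I$ directly by exhibiting infinite families of eigenfunctions $f\otimes g_n$ rather than via your Fredholm criterion, and in~(c) the paper reads off ``rank three'' from the three-term formula for $C$ whereas your defect-operator observation $C=(I-S_\phi S_\phi^*)-(I-S_\phi^*S_\phi)$ gives rank at most two --- your count is the sharper one, and both lead to the same ``three eigenvalues (including $0$) of infinite multiplicity'' after tensoring with $I_{\psi H^2_2(\mathbb{D})}$.
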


\begin{proof}
(a) Recall the representation of $S_{z_1}$ from Proposition \ref{p-representation}. The first component $T_{z_1}:\mathcal{S}_1^{max}\to \mathcal{S}_1^{max}$ is an isometry, since $\mathcal{S}_1^{max}$ is invariant under multiplication by $z_1$ and not surjective.
In the second component, the first factor $P_{\mathcal{K}^1_{\phi}} T_{z_1}$ is exactly the compression of the shift operator on the model space $\mathcal{K}^1_{\phi}.$ So model theory \cite{SzNF2010} informs us that we have a cnu contraction. 
The second factor is the identity on $\psi H^2_2(\mathbb{D})$ and does not influence this component. Hence the second component is a cnu contraction.

(b) The representation of $S_{z_1}$ tells us that $S_{z_1}|_{\Smax_1}$ is the shift operator $T_{z_1}$. So, the spectrum of this part equals $\overline{\mathbb{D}^2}$ and is purely absolutely continuous. For the second component, recall that the spectrum of the model operator $P_{\mathcal{K}^1_\phi} T_{z_1}$ on $\mathcal{K}^1_\phi$ equals that of the inner function $\phi$. More precisely (see e.g.~\cite{rg12}), the point spectrum of $P_{\mathcal{K}^1_\phi} T_{z_1}$ equals $\sigma(\phi)\cap \mathbb{D}$ and the essential spectrum is equal to $\sigma(\phi)\cap\partial \mathbb{D}$. And, since $\psi H^2_2(\mathbb{D})$ is infinite dimensional, even the discrete spectrum of $P_{\mathcal{K}^1_\phi} T_{z_1}$ yields essential spectrum for $P_{\mathcal{K}^1_\phi} T_{z_1} \otimes I_{\psi H^2_2(\mathbb{D})}$. To see this, let $f$ be an eigenfunction of $P_{\mathcal{K}^1_\phi} T_{z_1}$ and $\{g_n\}$ be a basis of $\psi H^2_2(\mathbb{D})$. Then $\{f\otimes g_n\}$ is an infinite linearly independent sequence of eigenfunctions for $P_{\mathcal{K}^1_\phi} T_{z_1} \otimes I_{\psi H_2^2(\mathbb{D})}$.
Therefore, the spectrum of $P_{\mathcal{K}^1_\phi} T_{z_1} \otimes I_{\psi H^2_2(\mathbb{D})}$ is essential and equals that of $\phi$.

(c) In the proof of Proposition \ref{t-prodessnorm} we computed
\begin{align*}
[S_{z_1}^*,S_{z_1}] 
&= 
[S_{z_1}^*,S_{z_1}]|_{\Smax_1}
\oplus
[S_{z_1}^*,S_{z_1}]|_{\Smin_2}
\\
&=
[I_{\mathcal{K}^2_\psi}\otimes M_0]
\oplus
[(M_0 - \langle T_{\bar{z}_1}\cdot, T_{\bar{z}_1}\phi\rangle\phi + \langle \cdot, T_{\bar z_1}\phi\rangle T_{\bar z_1}\phi)\otimes I_{\psi H_2^2(\mathbb D)}].
\end{align*}
Since $M_0$ is point evaluation at $0$, it is the rank one operator with the constant function as eigenvector and corresponding eigenvalue 1. Hence $[S_{z_1}^*,S_{z_1}]|_{\Smax_1}$ has eigenvalue 1 with multiplicity equal to the dimension of $\mathcal{K}_\psi^2$. On $\Smin_2$ the first factor is a rank three operator. Due to the second factor, each eigenvalue occurs with infinite multiplicity.
\end{proof}

\begin{remark} \textnormal{
Now, we briefly consider the situation where $\theta(z) = \phi(z_1)\psi(z_2)$ where $|\phi(z_1)|=1$ and $|\psi(z_2)|<1.$ As $|\phi(z_1)|=1$, it follows that $\mathcal{K}_{\phi}^1 =\{0\}.$ This means that the second components of both $S_{z_1}$ and $S_{z_2}$ are trivial. However, the first component of $S_{z_1}$ is still an isometry and the first component of $S_{z_2}$ is still cnu. Similarly, it is easy to see that in this case,
\[ \sigma(S_{z_1}) = \sigma( S_{z_1}|_{\Smax_1}) = \overline{\mathbb{D}^2} \ \text{ and }
\sigma(S_{z_2} )= \sigma(S_{z_2}|_{\Smin_1}) = \sigma(\psi). \]
Finally, in (c),  the commutator $[S_{z_1}^*,S_{z_1}]|_{\Smax_1}$ still has eigenvalue 1 with multiplicity equal to the dimension of $\mathcal{K}_\psi^2$. However, $[S_{z_1}^*,S_{z_1}]|_{\Smin_2}$ is trivial. Similarly,  the commutator $[S_{z_2}^*,S_{z_2}]|_{\Smax_2}$ is trivial, but 
 $[S_{z_2}^*,S_{z_2}]|_{\Smin_1}$ still has three eigenvalues with infinite multiplicity.}
\end{remark}

\section{The Degree of General Inner Functions and the Rank of Commutators --- The Proof of Theorem \ref{t-IFF}}\label{s-general}

We now consider the more general situation of an arbitrary inner function $\theta$ on the bidisk and the behavior of the compressed shifts $S_{z_j}$, $j=1,2$, on the model space $\mathcal{K}_{\theta}.$ Our goal in this section is a generalization of Proposition \ref{t-prodessnorm}, previously called Theorem \ref{t-IFF}.

We will first outline several auxiliary results that clarify the structure of $S_{z_1}$ and connect the structure of rational inner functions to properties of their Agler decompositions. We then prove Theorem \ref{t-IFF} in two steps and conclude with several open questions.

\subsection{Auxiliary results}
We require the following lemma, which is likely well-known and is contained for example, in \cite{b12}.

\begin{lemma} \label{lem:ShiftForm} Let $\theta$ be a two variable inner function on $\mathbb{D}^2$ and recall that $S_{z_1}^* = T_{\bar{z}_1}|_{\mathcal{K}_{\theta}}.$ Then for each $f \in \mathcal{K}_{\theta}$,
\[ 
S_{z_1}f(z)  = \left( S_{z_1}^* \right )^* f(z)  = z_1 f(z) - \left \langle f(w), \frac{T_{\bar{z}_1} \theta(w)}{1-\bar{z}_2 w_2} \right \rangle_{H^2} \theta(z).\]
Here, one should notice that the integration in the inner product is occurring with respect to the variable $w$.
\end{lemma}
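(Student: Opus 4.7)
The plan is to rewrite $S_{z_1}f = P_\theta(z_1 f) = z_1 f - P_{\theta H^2}(z_1 f)$ and exploit the inner-function projection identity $P_{\theta H^2}\phi = \theta\, P_{H^2}(\bar\theta\phi)$, valid on the distinguished boundary since $|\theta|=1$ on $\mathbb{T}^2$. This reduces the lemma to showing
\[ P_{H^2}\bigl(\bar\theta\, z_1 f\bigr)(z) = \Bigl\langle f(w),\, \tfrac{T_{\bar z_1}\theta(w)}{1-\bar z_2 w_2}\Bigr\rangle_{H^2}. \]

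Because $f \in \mathcal{K}_\theta$ means $\bar\theta f \perp H^2$, its $L^2(\mathbb{T}^2)$ Fourier expansion only contains monomials $w_1^m w_2^n$ with $m \le -1$ or $n \le -1$. Multiplication by $z_1$ shifts only the first index, and then reprojecting onto $H^2$ (which demands $m,n \ge 0$) retains exactly the contributions from $m=-1$, $n \ge 0$. Hence $P_{H^2}(\bar\theta z_1 f)$ is a function of $z_2$ alone, whose coefficient at $z_2^n$ equals the $(-1,n)$-Fourier coefficient of $\bar\theta f$, namely $\int_{\mathbb{T}^2} f(w)\,\overline{\theta(w)}\, w_1 \bar w_2^n\,dm(w)$.

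To match this with the claimed inner product, I would invoke the torus identity $\overline{T_{\bar z_1}\theta(w)} = w_1\bigl(\overline{\theta(w)} - \overline{\theta(0,w_2)}\bigr)$, which follows from $T_{\bar z_1}\theta(w) = (\theta(w)-\theta(0,w_2))/w_1$ together with $\bar w_1 = 1/w_1$ on $\mathbb{T}^2$. The correction term $\int f(w)\,w_1\,\overline{\theta(0,w_2)}\,\bar w_2^n\,dm(w)$ vanishes because $f \in H^2$ carries no negative $w_1$ Fourier content, so the $(-1,n)$-Fourier coefficient computed above is precisely $\langle f,\, (T_{\bar z_1}\theta)\, w_2^n\rangle_{H^2}$. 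Pulling the geometric series $\sum_{n\ge 0} z_2^n \bar w_2^n = 1/(1-z_2\bar w_2)$ inside the inner product then yields the formula in the statement. The main obstacle, as is typical with such manipulations, is the careful bookkeeping of conjugates and Fourier indices when toggling between $H^2$ inner products and Laurent expansions on $\mathbb{T}^2$; once the conjugation identity for $\overline{T_{\bar z_1}\theta}$ is in hand, the remainder is routine summation.
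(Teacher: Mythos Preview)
Your argument is correct and complete. The paper, however, takes a different route: it computes $T_{\bar z_1}$ applied to the reproducing kernel $K_w$ of $\mathcal{K}_\theta$ directly, obtaining
\[
T_{\bar z_1} K_w(z) = \bar w_1 K_w(z) - \overline{\theta(w)}\,\frac{T_{\bar z_1}\theta(z)}{1-z_2\bar w_2},
\]
and then reads off $(S_{z_1}^*)^* f(z) = \langle f, T_{\bar z_1} K_z\rangle_{\mathcal{K}_\theta}$ from the reproducing property. That approach stays entirely within the reproducing-kernel framework already central to the paper and avoids any Fourier bookkeeping on $\mathbb{T}^2$. Your approach instead uses the inner-function projection identity $P_{\theta H^2} = \theta P_{H^2}\bar\theta$ and a direct Fourier-coefficient count, which is arguably more elementary (no kernel manipulations required) and makes transparent \emph{why} the answer depends only on $z_2$: the surviving coefficients of $\bar\theta z_1 f$ land on the $z_2$-axis. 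Both arguments are short; the paper's is cleaner if one is already fluent with kernel calculus, while yours is more self-contained for a reader thinking in terms of boundary Fourier series.
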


The proof is a simple calculation, which we include for the reader's convenience. 

\begin{proof}
First, observe that when we
apply the backward shift $T_{\bar{z}_1}$ to the reproducing kernel of $\mathcal{K}_{\theta}$, we get:
\[
T_{\bar{z}_1} \frac{1-\overline{\theta(w)} \theta(z)}{(1-z_1 \bar{w}_1)(1-z_2 \bar{w}_2)} = \bar{w}_1\frac{1-\overline{\theta(w)} \theta(z)}{(1-z_1 \bar{w}_1)(1-z_2 \bar{w}_2)} -
\overline{\theta(w)} \frac{(T_{\bar{z}_1}\theta)(z) }{1-z_2\bar{w}_2}.\]
 Now, we can
calculate the adjoint of $S_{z_1}^*$. Let
$f \in \mathcal{K}_{\theta}$ and $w \in \mathbb{D}^2$. Then
\[
\begin{aligned}
 (S_{z_1}^*)^*f(z) &= \left \langle  (S_{z_1}^*)^*f(w),
 \frac{1-\overline{\theta(z)} \theta(w)}{(1-w_1 \bar{z}_1)(1-w_2 \bar{z}_2)} \right \rangle_{\mathcal{K}_{\theta}} \\ 
&= \left \langle f(w),  T_{\bar{z}_1} \frac{1-\overline{\theta(z)} \theta(w)}{(1-w_1 \bar{z}_1)(1-w_2 \bar{z}_2)} \right \rangle_{\mathcal{K}_{\theta}}  \\ 
&= \left \langle f(w) , \bar{z}_1\frac{1-\overline{\theta(z)} \theta(w)}{(1-w_1 \bar{z}_1)(1-w_2 \bar{z}_2)} -
\overline{\theta(z)} \frac{(T_{\bar{z}_1}\theta)(w) }{1-w_2\bar{z}_2} \right \rangle_{\mathcal{K}_{\theta}}  \\ 
& = z_1 f(z) -  \left \langle f(w), \frac{T_{\bar{z}_1} \theta(w)}{1-\bar{z}_2 w_2} \right \rangle_{H^2} \theta(z),
\end{aligned} 
\]
which is the desired formula.
\end{proof}

The proof of Theorem \ref{t-IFF} uses connections between rational inner functions, Agler kernels, and the structure of $S_{z_1}$. So, we need several results concerning properties of rational inner functions and their associated Agler kernels. 

First, given a polynomial $p$ with $\deg p = (m,n),$ define its reflection $\tilde{p}$ to be the polynomial $\tilde{p}(z) = z_1^m z_2^n \overline{p( \frac{1}{\bar{z}_1}, \frac{1}{\bar{z}_2})}.$ Then, a result due to Rudin \cite{Rud69} states that  if $\theta$ is rational inner, then there is an (almost) unique polynomial $p$ with no zeros on $\mathbb{D}^2$ such that
\[ \theta(z) = \frac{ \tilde{p}(z)}{p(z)}, \]
and $p$ and $\tilde{p}$ share no common factors. Given that, we can state the following result, which is encoded in Theorems 1.7 and 1.8 in \cite{bk13} as well as in a slightly different form in Proposition 2.5 in \cite{w10}.

\begin{theorem} \label{thm:rational} Let $\theta = \frac{ \tilde{p}}{p}$ be a rational inner function of degree $(m,n)$. Then 
\[ 
\begin{aligned}
\dim \mathcal{H}(K^{max}_1) &= \dim \mathcal{H}(K^{min}_1) =n \\
\dim \mathcal{H}(K^{max}_2) &= \dim \mathcal{H}(K^{min}_2) =m.
\end{aligned}
\]
Furthermore, if $f$ is a function in $\mathcal{H}(K^{max}_1) $ or $\mathcal{H}(K^{min}_1)$ then $f = \frac{q}{p}$ where $\deg q \le (m,n-1)$ and  if $g$ is a function in $\mathcal{H}(K^{max}_2) $ or $\mathcal{H}(K^{min}_2)$ then $g = \frac{r}{p}$, where $\deg r \le (m-1,n).$
\end{theorem}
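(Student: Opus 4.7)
The plan is to reformulate the Agler decomposition as a polynomial identity and then carry out polynomial degree and rank bookkeeping; this is the strategy of \cite{bk13, w10}. Writing $\theta = \tilde p/p$ and multiplying equation~(\ref{eqn:akernels}) through by $p(z)\overline{p(w)}$, I would first reduce to the polynomial identity
\[
p(z)\overline{p(w)} - \tilde p(z)\overline{\tilde p(w)} = (1-z_1\bar w_1)\, E(z,w) + (1-z_2\bar w_2)\, F(z,w),
\]
where $F(z,w) \equiv p(z)\overline{p(w)} K_1(z,w)$ and $E(z,w) \equiv p(z)\overline{p(w)} K_2(z,w)$ are positive polynomial kernels. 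The left side has bidegree at most $(m,n)$ in $z$ and in $\bar w$, and since $\theta$ is inner, $|p|^2 = |\tilde p|^2$ on $\mathbb{T}^2$ forces it to vanish suitably on the torus.

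Next I would establish degree bounds: by absorbing excess--degree monomials of $F$ into compensating adjustments of $E$ (the max and min extremal choices correspond to the two natural ways to split), one can arrange $\deg_z F \le (m, n-1)$, $\deg_{\bar w} F \le (m, n-1)$, and symmetrically $\deg E \le (m-1, n)$ in each set of variables. Every kernel section $K_1(\cdot, w) = F(\cdot, w)/(p\cdot\overline{p(w)})$ is then a rational function with denominator $p$ and numerator of bidegree at most $(m, n-1)$; since $\mathcal{H}(K_1^{max})$ and $\mathcal{H}(K_1^{min})$ are the closed linear spans of their sections, every element has the claimed form $q/p$ with $\deg q \le (m, n-1)$. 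The symmetric statement for $\mathcal{H}(K_2^{\bullet})$ follows by swapping $z_1 \leftrightarrow z_2$. The dimension of such a reproducing kernel Hilbert space equals the rank of $F$ (respectively $E$) as a Gram matrix in the monomial basis of polynomials of the allowed bidegree, giving an immediate coarse upper bound of $(m+1)n$ (respectively $m(n+1)$).

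The delicate step is the sharp dimension count of exactly $n$ (respectively $m$). The coarse polynomial bound $(m+1)n$ is much larger than $n$, so a second refinement is needed. The cleanest route is to invoke the Beurling--Lax--Halmos realization $\Smax_1 = \Theta_1 H^2(\mathbb{D}_1; \mathcal{E}_1)$ for the largest $T_{z_1}$--invariant subspace, whereupon $\mathcal{H}(K_1^{max}) = \Theta_1 \mathcal{E}_1$ and $\dim \mathcal{H}(K_1^{max}) = \dim \mathcal{E}_1$; the specific extremal construction in \cite{bsv05, bk13} then identifies $\dim \mathcal{E}_1 = n$. A useful sanity check is the tensor product case $\theta = \phi \psi$ from Section~\ref{s-exa}, where $\dim \mathcal{H}(K_1^{max}) = \dim \mathcal{K}^2_{\psi} = n$ explicitly. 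The main obstacle throughout is ensuring the dimensions do not collapse: the hypothesis that $p$ and $\tilde p$ share no common factor is essential, preventing degenerate decompositions that would shrink the rank.
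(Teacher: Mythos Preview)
The paper does not prove Theorem~\ref{thm:rational}; it quotes the result from Theorems~1.7 and~1.8 of \cite{bk13} and Proposition~2.5 of \cite{w10}. So there is no in-paper argument to compare against, and your sketch is effectively a summary of what those references do.

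That said, as a self-contained argument your sketch has a real gap at the sharp dimension step. You correctly set up the polynomial identity and get the coarse bound $\dim\mathcal H(K_1^{\bullet})\le (m+1)n$, but then for the exact value $n$ you write that ``the specific extremal construction in \cite{bsv05, bk13} then identifies $\dim\mathcal E_1=n$.'' That is circular: it defers precisely the content of the theorem back to the references the paper already cites. A genuine proof needs an independent reason why the multiplicity drops from $(m+1)n$ to $n$. In \cite{bk13} this comes from the restriction result (Theorem~\ref{thm:restriction} here): for a.e.\ $t\in\mathbb T$ the map $f\mapsto f(\cdot,t)$ embeds $\mathcal H(K_1^{\bullet})$ isometrically into the one-variable model space $H^2(\mathbb T)\ominus\theta(\cdot,t)H^2(\mathbb T)$, and when $\theta$ is rational inner of degree $(m,n)$ the slice $\theta(\cdot,t)$ is a Blaschke product of degree $n$, so that model space has dimension $n$. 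This gives $\dim\mathcal H(K_1^{\bullet})\le n$; equality then follows from the transfer-function/coisometry count (cf.\ Remark~5.2 in \cite{bk13}, alluded to in Theorem~\ref{thm:rational3}).

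There is also a smaller loose end: your phrase ``the max and min extremal choices correspond to the two natural ways to split'' conflates the algebraic freedom in the polynomial identity with the analytic definition of $\mathcal S_j^{max}$ and $\mathcal S_j^{min}$ as maximal/minimal $z_j$-invariant subspaces of $\mathcal K_\theta$. That identification is true but nontrivial, and is itself part of what \cite{bk13} establishes; you should either prove it or acknowledge it as a separate input.
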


The converse of this theorem is also true and follows from the representation of $\theta$ as a transfer function of a coisometry defined using its Agler kernels. Although likely well-known, a reading of the construction in Remark 5.2 in \cite{bk13} paired with the definition of a transfer function realization of $\theta$ will immediately reveal the following:

\begin{theorem} \label{thm:rational3} If $\theta$ is an inner function such that 
\[ 
\dim \mathcal{H}(K^{max}_1)=n < \infty \text{ and }  \dim \mathcal{H}(K^{min}_2) =m<\infty,
\]
then $\theta$ is a rational inner function. Theorem \ref{thm:rational} then immediately implies that
$\deg \theta = (m,n).$
\end{theorem}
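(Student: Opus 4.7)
The plan is to realize $\theta$ as the transfer function of a coisometric colligation whose state space is $\mathcal{H}(K_1^{max}) \oplus \mathcal{H}(K_2^{min})$, and then to extract rationality from the finite-dimensionality of that state space. This follows the lurking-isometry construction standard in multivariate Agler-Schur function theory; the precise version needed here is the one recorded in Remark 5.2 of \cite{bk13}.

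Starting from the Agler decomposition associated to the canonical pair $(K_1^{max}, K_2^{min})$,
\begin{equation*}
1 - \theta(z)\overline{\theta(w)} = (1 - z_1\bar w_1)\, K_2^{min}(z,w) + (1 - z_2\bar w_2)\, K_1^{max}(z,w),
\end{equation*}
I would first rearrange terms so that both sides are positive kernels:
\begin{equation*}
1 + z_1 \bar w_1 K_2^{min}(z,w) + z_2 \bar w_2 K_1^{max}(z,w) \;=\; \theta(z)\overline{\theta(w)} + K_1^{max}(z,w) + K_2^{min}(z,w).
\end{equation*}
Reading each side as a Gram matrix of vectors indexed by $w \in \mathbb{D}^2$ inside $\mathbb{C} \oplus \mathcal{H}(K_2^{min}) \oplus \mathcal{H}(K_1^{max})$, this identity asserts precisely that the map
\begin{equation*}
\bigl(1,\; \bar w_1\, K_2^{min}(\cdot,w),\; \bar w_2\, K_1^{max}(\cdot,w)\bigr) \;\longmapsto\; \bigl(\overline{\theta(w)},\; K_2^{min}(\cdot,w),\; K_1^{max}(\cdot,w)\bigr)
\end{equation*}
is a well-defined isometry on the closed linear span of its domain. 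Passing to the adjoint (and extending as zero on any orthogonal complement) yields a coisometric colligation
\begin{equation*}
U = \begin{pmatrix} A & B \\ C & D \end{pmatrix} : \mathbb{C} \oplus \bigl(\mathcal{H}(K_2^{min}) \oplus \mathcal{H}(K_1^{max})\bigr) \longrightarrow \mathbb{C} \oplus \bigl(\mathcal{H}(K_2^{min}) \oplus \mathcal{H}(K_1^{max})\bigr).
\end{equation*}

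A routine Schur-complement computation then produces the transfer function formula
\begin{equation*}
\theta(z) = A + B\, Z(z)\bigl(I - D\, Z(z)\bigr)^{-1} C,
\end{equation*}
where $Z(z)$ is multiplication by $z_1$ on $\mathcal{H}(K_2^{min})$ and by $z_2$ on $\mathcal{H}(K_1^{max})$. Under our hypothesis, the state space $\mathcal{H}(K_2^{min}) \oplus \mathcal{H}(K_1^{max})$ has finite dimension $m+n$, so $D$ is an ordinary finite matrix and $\bigl(I - D Z(z)\bigr)^{-1}$ is a matrix of rational functions in $(z_1, z_2)$. Consequently $\theta$ is rational, and since it was assumed to be inner, it is a rational inner function. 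The degree claim then follows immediately by applying Theorem \ref{thm:rational} to this $\theta$: if $\deg\theta = (m', n')$, then $\dim \mathcal{H}(K_1^{max}) = n'$ and $\dim \mathcal{H}(K_2^{min}) = m'$, forcing $(m', n') = (m, n)$. The main technical obstacle is verifying that the lurking isometry can be extended to a coisometry on precisely the state space $\mathcal{H}(K_2^{min}) \oplus \mathcal{H}(K_1^{max})$ (rather than on some strictly larger dilation) and confirming that the resulting transfer formula genuinely reproduces $\theta$; both points are standard in the Schur-Agler realization framework and are treated in \cite{bk13}.
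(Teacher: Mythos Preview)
Your proposal is correct and follows exactly the route the paper indicates: the paper does not give a detailed proof but simply points to the coisometric transfer-function realization of $\theta$ built from the Agler kernels (Remark~5.2 of \cite{bk13}), which is precisely the lurking-isometry construction you carry out. The finite-dimensionality of the state space then forces rationality, and the degree claim follows from Theorem~\ref{thm:rational} just as you say.
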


In Section 2, when $\theta$ was a product of one variable inner functions, many arguments relied on the fact that $\Smax_j$ and $\Smin_j$ were closely related to one variable model spaces. We require the following generalization of those relationships for arbitrary inner functions, which appears as Theorem 1.6 in \cite{bk13}. 
 
 \begin{theorem}\label{thm:restriction} Let $\theta$ be an inner function on $\mathbb{D}^2.$ Then for  almost every $t \in \mathbb{T}$, the map
 \[ f \mapsto f(\cdot, t)\]
 embeds $\mathcal{H}(K^{max}_2)$ and $\mathcal{H}(K^{min}_2)$ isometrically into $H^2(\mathbb{T}) \ominus \theta( \cdot, t) H^2(\mathbb{T}).$ An analogous fact holds for $\mathcal{H}(K^{max}_1)$ and $\mathcal{H}(K^{min}_1).$
 \end{theorem}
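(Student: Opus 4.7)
The plan is to exploit the wandering-subspace definition $\mathcal{H}(K^{max}_2) = \Smax_2 \ominus z_2 \Smax_2$ (and analogously for $\mathcal{H}(K^{min}_2)$) and convert the resulting orthogonality into a pointwise statement on boundary slices. For $f \in \mathcal{H}(K^{max}_2) \subset H^2(\mathbb{D}^2)$ I would expand
\[
f(z_1, z_2) = \sum_{k \ge 0} \hat{f}_k(z_1)\, z_2^k, \qquad \hat{f}_k \in H^2(\mathbb{T}),
\]
so that $\|f\|^2 = \sum_k \|\hat{f}_k\|^2$ and, for a.e.\ $t \in \mathbb{T}$, the slice $f(\cdot, t)$ lies in $H^2(\mathbb{T})$.

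First I would verify that $f(\cdot, t) \perp \theta(\cdot, t) H^2(\mathbb{T})$ for a.e.\ $t$. Since $f \in \Smax_2 \subseteq \mathcal{K}_\theta$ and $\Smax_2$ is $z_2$-invariant, $z_2^k f \perp \theta H^2(\mathbb{D}^2)$ for every $k \ge 0$. Imposing $\langle z_2^k f, \theta \cdot g(z_1) z_2^{\ell}\rangle = 0$ for all $k, \ell \ge 0$ and $g \in H^2(\mathbb{T})$, and letting $m = k - \ell$ range over $\mathbb{Z}$, Fubini forces every Fourier coefficient (in $t$) of the $L^1$-function
\[
t \mapsto \int_{\mathbb{T}} f(z_1, t)\, \overline{\theta(z_1, t)\, g(z_1)}\, dm(z_1)
\]
to vanish, so this integral is zero for a.e.\ $t$. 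A countable dense choice of $g$'s yields the orthogonality on a single cofinite set of $t$.

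The main step is the isometry. From $\mathcal{H}(K^{max}_2) \perp z_2 \Smax_2$, together with $z_2^{j-1} f \in \Smax_2$ for $j \ge 1$, one extracts $\langle f, z_2^j f\rangle_{H^2(\mathbb{D}^2)} = 0$ for every $j \ge 1$, which expands to
\[
\sum_{k \ge 0} \langle \hat{f}_{k+j}, \hat{f}_k \rangle_{H^2(\mathbb{T})} = 0 \qquad (j \ge 1),
\]
with conjugation handling $j \le -1$. Put $g(t) := \|f(\cdot, t)\|^2_{H^2(\mathbb{T})}$; then $g \in L^1(\mathbb{T})$ with $\int g\, dm = \|f\|^2$, and a Fourier-coefficient computation (whose absolute convergence is a single Cauchy--Schwarz against $\sum_k \|\hat{f}_k\|^2 < \infty$) gives $\hat{g}(m) = \sum_{k \ge \max(0, -m)} \langle \hat{f}_{k+m}, \hat{f}_k\rangle$. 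The displayed orthogonality kills $\hat{g}(m)$ for every $m \ne 0$, so $g$ is a.e.\ constant, necessarily equal to $\hat{g}(0) = \|f\|^2$.

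Finally, to produce a single null set valid for every $f$ simultaneously, I would fix a countable dense subset $\{f_n\} \subset \mathcal{H}(K^{max}_2)$ and discard the union of the corresponding null sets; outside this union the slice map is linear and isometric on $\{f_n\}$, hence extends to — and agrees with — an isometry on all of $\mathcal{H}(K^{max}_2)$. The case of $\mathcal{H}(K^{min}_2)$ is identical once one uses that $\Smin_2$ is $z_2$-invariant, and the assertions for $\mathcal{H}(K^{max}_1)$ and $\mathcal{H}(K^{min}_1)$ follow by interchanging the roles of $z_1$ and $z_2$. I expect the main technical obstacle to be keeping the Fourier manipulation in the isometry step honest: the double sum $\sum_{j,k} \langle \hat{f}_k, \hat{f}_j\rangle t^{k-j}$ is not absolutely convergent in general, so one must work at the level of $L^1$-Fourier coefficients of $g$ rather than rearranging a pointwise series.
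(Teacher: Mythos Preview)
The paper does not prove this theorem; it is imported verbatim as Theorem~1.6 of \cite{bk13} and used as a black box. So there is no in-paper argument to compare against. That said, your proposal is correct and self-contained, and is worth recording.

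Your orthogonality step is sound: since $\Smax_2$ (resp.\ $\Smin_2$) is $z_2$-invariant and sits inside $\mathcal{K}_\theta$, the relations $\langle z_2^{k}f,\,\theta\,g(z_1)z_2^{\ell}\rangle=0$ for all $k,\ell\ge 0$ exhaust every Fourier coefficient of $t\mapsto\langle f(\cdot,t),\theta(\cdot,t)g\rangle_{L^2(\mathbb{T})}$, forcing that function to vanish a.e. The isometry step is also correct, and your decision to compute $\hat g(m)=\langle f, z_2^{m}f\rangle_{L^2(\mathbb{T}^2)}$ directly (rather than rearrange a conditionally convergent double series) is exactly the right move; the wandering-subspace condition $f\perp z_2\Smax_2$ together with $z_2^{j-1}f\in\Smax_2$ kills $\hat g(m)$ for every $m\neq 0$, and the $L^1$ uniqueness theorem finishes it.

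One small tightening in the final step: norm preservation on each individual $f_n$ is not quite enough to conclude that the slice map is isometric on the dense linear span. You should apply the argument to all pairs $f_n,f_m$ (via polarization, or equivalently to $f_n\pm f_m$ and $f_n\pm if_m$), which is still a countable family of a.e.\ conditions; then for $t$ outside a single null set the map preserves all inner products on the span and hence extends to an isometry on the closure. That extension agrees with actual slicing because $f_N\to f$ in $H^2(\mathbb{D}^2)$ forces $\|(f-f_N)(\cdot,t)\|_{L^2(\mathbb{T})}\to 0$ along a subsequence for a.e.\ $t$, by Fubini.
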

 
The above theorem uses the fact that, for almost every $t$ in $\mathbb{T}$ there is an inner function, traditionally denoted $\theta(\cdot ,t)$, which has boundary values $\theta(t_1,t)$ for almost every $t_1$ in $\mathbb{T}$. 

\subsection{The proof of Theorem \ref{t-IFF}}\label{proof-IFF}
We begin with the $(\Leftarrow)$-statement, which is encoded in the following auxiliary theorem:
\begin{theorem}
\label{t-2n} Let $\theta$ be a rational inner function with $\deg \theta \le (1,n)$. Then the commutator $[S_{z_1}^*, S_{z_1}]$ on $\mathcal{K}_{\theta}$ has rank at most $n$. In particular, it is essentially normal.
\end{theorem}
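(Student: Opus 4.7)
The plan is to leverage Theorem \ref{thm:rational}, which translates the hypothesis $\deg \theta \le (1,n)$ into the dimension bounds $\dim \mathcal{H}(K_1^{max}) \le n$ and $\dim \mathcal{H}(K_2^{min}) \le 1$, together with the Agler subspace decomposition $\mathcal{K}_{\theta} = \Smax_1 \oplus \Smin_2$. First, the case $\deg \theta \le (0,n)$ is straightforward: $\theta$ depends only on $z_2$, so $\mathcal{K}_{\theta} = H^2_1(\mathbb{D}) \otimes \mathcal{K}^2_{\theta}$ with $\dim \mathcal{K}^2_{\theta} \le n$, and $S_{z_1} = T_{z_1} \otimes I$; then $[S_{z_1}^*, S_{z_1}] = [T_{z_1}^*, T_{z_1}] \otimes I$ is a rank-one projection tensored with the identity, giving rank $\le n$. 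The essential case is thus $\deg \theta = (1, n)$, where $\dim \mathcal{H}(K_1^{max}) = n$ and $\dim \mathcal{H}(K_2^{min}) = 1$.

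Using Lemma \ref{lem:ShiftForm}, define $\Phi(f) \in H^2_2(\mathbb{D})$ for $f \in \mathcal{K}_{\theta}$ by $\Phi(f)(z_2) = \langle f(w), \frac{T_{\bar w_1}\theta(w)}{1-\bar z_2 w_2}\rangle_{H^2}$, so that the lemma reads $S_{z_1}f = T_{z_1}f - \theta(z)\Phi(f)(z_2)$; this is the decomposition of $T_{z_1}f$ into its $\mathcal{K}_{\theta}$ and $\theta H^2(\mathbb{D}^2)$ components (one checks $P_{\theta H^2}(z_1 f) \in \theta \cdot H^2_2(\mathbb{D})$ because $z_1 f \perp z_1 \theta H^2$). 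A direct calculation using $T_{\bar z_1}T_{z_1} = I$, $T_{z_1}T_{\bar z_1} = I - P_0$ (with $P_0$ the projection of $H^2(\mathbb{D}^2)$ onto functions independent of $z_1$), and $T_{\bar z_1}(\theta g(z_2)) = (T_{\bar z_1}\theta)g$ yields the central identity
\[
[S_{z_1}^*, S_{z_1}] f \;=\; P_{\theta}\bigl(f(0, z_2)\bigr) \;-\; (T_{\bar z_1}\theta)(z) \cdot \Phi(f)(z_2).
\]

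Next I analyze the two pieces of $\mathcal{K}_{\theta} = \Smax_1 \oplus \Smin_2$ separately. On $\Smax_1$: since $T_{z_1}f = z_1 f$ stays in $\Smax_1 \subset \mathcal{K}_{\theta}$ for $f \in \Smax_1$, one has $\Phi(f) = 0$, and the commutator reduces to $f \mapsto P_{\theta}(f(0, z_2))$; this kills $z_1 \Smax_1$ (where $f$ vanishes at $z_1 = 0$) and hence factors through $\Smax_1 / z_1 \Smax_1 \cong \mathcal{H}(K_1^{max})$, so $[S_{z_1}^*, S_{z_1}](\Smax_1)$ has dimension at most $\dim \mathcal{H}(K_1^{max}) = n$. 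On $\Smin_2$: because $\dim \mathcal{H}(K_2^{min}) = 1$, this subspace is cyclic under $T_{z_2}$, with every element of the form $\phi h$ for $h \in H^2_2(\mathbb{D})$ and $\phi = r(z_2)/p(z)$ the canonical generator (Theorem \ref{thm:rational}). The key claim, which I would prove, is the identity $P_{\theta}(f(0, z_2)) = (T_{\bar z_1}\theta) \Phi(f)$ for every $f \in \Smin_2$, so that $[S_{z_1}^*, S_{z_1}]|_{\Smin_2} = 0$. Concretely, writing $p = a(z_2) + z_1 b(z_2)$ and computing $T_{\bar z_1}\theta = (a\tilde a - b\tilde b)/(a p)$, one substitutes $f = \phi h$ and reduces the desired equation to an algebraic identity in the polynomials $a$, $b$, $\tilde a$, $\tilde b$, $r$, and the $H^2_2$-factor $h$, which is then verified directly.

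The main obstacle is this last vanishing on $\Smin_2$. The product case of Proposition \ref{t-prodessnorm} makes the identity transparent through the explicit tensor structure $\Smin_2 = \mathcal{K}^1_{\phi} \otimes \psi H^2_2(\mathbb{D})$, but for a general rational inner $\theta$ of bidegree $(1, n)$ one must identify $\Phi(\phi h)$ correctly using the Rudin decomposition $\theta = \tilde p/p$ and delicate manipulation of polynomial relations between $p$ and $\tilde p$; Theorem \ref{thm:restriction}, which isometrically embeds the Agler kernel spaces into one-variable model spaces on slices $z_2 = t$, provides helpful auxiliary structure, since the slice $\theta(\cdot, t)$ has degree at most $1$ in $z_1$ and so is either constant or a single Blaschke factor. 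Once the vanishing on $\Smin_2$ is in place, the range of $[S_{z_1}^*, S_{z_1}]$ coincides with its range on $\Smax_1$, giving $\mathrm{rank}\,[S_{z_1}^*, S_{z_1}] \le n$; finite rank implies compact, so $S_{z_1}$ is essentially normal.
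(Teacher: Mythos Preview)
Your commutator identity $[S_{z_1}^*,S_{z_1}]f = P_\theta(f(0,z_2)) - (T_{\bar z_1}\theta)\,\Phi(f)$ is correct, and the analysis on $\Smax_1$ is fine. The gap is the ``key claim'' that $[S_{z_1}^*,S_{z_1}]$ vanishes identically on $\Smin_2$. This is false in general: it holds in the product case of Section~\ref{s-exa} precisely because $(\Smax_1,\Smin_2)$ are then \emph{reducing} for $S_{z_1}$ (Proposition~\ref{p-representation}), but by Theorems~\ref{thm:Reducing1} and~\ref{t-iff} that reducing property fails whenever $\theta$ is rational inner of degree $(1,n)$ and is not a product. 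Concretely, for $\theta=(2z_1z_2-z_1-z_2)/(2-z_1-z_2)$ one finds the Agler data $f_1(z)=\sqrt{2}(1-z_1)$, $g(z)=-\sqrt{2}(1-z_2)$; the paper's computation of $[S_{z_1}^*,S_{z_1}]K^2_w$ then specializes to
\[
[S_{z_1}^*,S_{z_1}]K^2_w \;=\; P_\theta\!\left(\frac{T_{\bar z_1}f_1}{p(0,z_2)}\right)\overline{T_{\bar z_1}\tfrac{f_1}{p}(w)} \;=\; P_\theta\!\left(\frac{-\sqrt{2}}{2-z_2}\right)\cdot \overline{\frac{\sqrt{2}(w_2-1)}{(2-w_1-w_2)(2-w_2)}},
\]
which is nonzero since $1/(2-z_2)\notin\theta H^2(\mathbb{D}^2)$ (indeed $\theta$ vanishes at $(0,0)$). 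So the proposed algebraic verification ``$P_\theta(f(0,z_2))=(T_{\bar z_1}\theta)\Phi(f)$ for $f\in\Smin_2$'' cannot succeed.

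What the paper actually does is close to your setup but with a different punchline: it computes $[S_{z_1}^*,S_{z_1}]$ on the reproducing kernels $K^1_w$ of $\Smax_1$ and $K^2_w$ of $\Smin_2$ separately and shows that \emph{both} images land in the single $n$-dimensional space
\[
P_\theta\!\left(\frac{1}{p(0,z_2)}\cdot\operatorname{span}\{1,z_2,\dots,z_2^{\,n-1}\}\right),
\]
using that the numerators $T_{\bar z_1}f_i$ and $f_i(0,z_2)$ are polynomials in $z_2$ of degree at most $n-1$. The $\Smin_2$ contribution is not zero; rather, it lies in the same small range as the $\Smax_1$ contribution, and that is what gives rank $\le n$. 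The missing idea in your plan is therefore not the vanishing on $\Smin_2$, but the identification of a common $n$-dimensional target for both pieces.
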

\begin{proof} 
Let $\theta$ be a rational inner function of degree at most $(1,n)$. By Theorem \ref{thm:rational}, this means that there are functions $f_1, \dots, f_n$ with $\deg f \le (1,n-1)$ and $g$ with $\deg g \le (0,n)$ such that 
\[ 
K^{max}_1(z,w) = \sum_{i=1}^n \frac{f_i(z) \overline{f_i(w)}}{p(z)\overline{p(w)}}  \ \text{ and } \  K^{min}_2(z,w) =   \frac{ g(z) \overline{g(w)}}{p(z)\overline{p(w)}}.\]
We can choose these functions so that they are orthogonal and either normalized or trivial. This also gives:
\[ \Smax_1 = \mathcal{H} \left( \frac{ \sum_{i=1}^n f_i(z) \overline{f_i(w)}}{p(z)\overline{p(w)}(1-z_1 \bar{w}_1)} \right) 
 \ \text{ and } \ \Smin_2 = \mathcal{H} \left( \frac{ g(z) \overline{g(w)}}{p(z)\overline{p(w)}(1-z_2 \bar{w}_2)} \right).
  \]
 Then, for each $w$ in $\mathbb{D}^2,$ we can write the reproducing kernel of $\mathcal{K}_{\theta}$ as
\[
K_w(z) \equiv 
\frac{1 -\theta(z)\overline{\theta(w)}}{(1-z_1\bar{w}_1)(1-z_2\bar{w}_2)} = \frac{ \sum_{i=1}^n f_i(z) \overline{f_i(w)}}{p(z)\overline{p(w)}(1-z_1 \bar{w}_1)}+ \frac{ g(z) \overline{g(w)}}{p(z)\overline{p(w)}(1-z_2 \bar{w}_2)}.
\]
For ease of notation, set
\[ K^1_w(z) \equiv  \frac{ \sum_{i=1}^n f_i(z) \overline{f_i(w)}}{p(z)\overline{p(w)}(1-z_1 \bar{w}_1)} \text{ and } K^2_w(z) \equiv  \frac{ g(z) \overline{g(w)}}{p(z)\overline{p(w)}(1-z_2 \bar{w}_2)}\]
to be the reproducing kernels for $\mathcal{S}^{max}_1$ and $\mathcal{S}^{min}_2.$ We first obtain  formulas for 
 \[ \left[S^*_{z_1}, S_{z_1}\right] K^1_w \ \text{ and } \ \left[S^*_{z_1}, S_{z_1}\right] K^2_w. \]
Let us consider $K^1_w.$ Since $\Smax_1$ is invariant under multiplication by $z_1$,   
\[ S^*_{z_1} S_{z_1} K^1_w = K^1_w\] 
 and similarly, 
 \[  
 \begin{aligned}
 S_{z_1}S^*_{z_1}K^1_w &= P_{\theta} \left( T_{z_1} S^*_{z_1}K^1_w \right) = K^1_w - P_{\theta} \left( K^1_w(0, z_2) \right).
 \end{aligned}
 \]
Using the formula for $K^1_w$, it is clear that
\begin{equation} \label{eqn:Max}
  \left[S^*_{z_1}, S_{z_1}\right] K^1_w   = P_{\theta} \left( K^1_w(0, z_2) \right) =    P_{\theta} \left(  \sum_{i=1}^{n}\frac{f_i(0,z_2)}{p(0,z_2)} \frac{ \overline{f_i(w)}}{\overline{p(w)}} \right). \end{equation}
Now, consider $K_w^2$. First, for $K_1 \equiv K^{max}_1$ and $K_2 \equiv K^{min}_2$, we can 
  rewrite equation \eqref{eqn:akernels} as 
 \begin{eqnarray}
  \frac{z_1 \bar{w}_1 K^{min}_2(z,w)}{1-z_2\bar{w}_2} &=& K^{max}_1(z,w) + \frac{K^{min}_2(z,w)}{1-z_2 \bar{w}_2} + \frac{ \theta(z) \overline{\theta(w)}}{1-z_2 \bar{w}_2}  - \frac{1}{1-z_2\bar{w}_2} \nonumber \\
& =& \sum_{i=1}^n \frac{f_i(z)}{p(z)} \frac{\overline{ f_i(w)}}{\overline{p(w)}} + \frac{ g(z) \overline{g(w)}}{p(z) \overline{p(w)}(1-z_2 \bar{w}_2)} + \frac{ \theta(z) \overline{\theta(w)}}{1-z_2 \bar{w}_2}  - \frac{1}{1-z_2\bar{w}_2}  \nonumber \\
 & = &\sum_{i=1}^n \frac{f_i(z)}{p(z)} \bar{w}_1 \overline{  T_{\bar{z}_1}\tfrac{f_i}{p}(w)} + \frac{ \frac{g(z)}{p(z)}  \bar{w}_1\overline{  T_{\bar{z}_1}  \frac{g}{p}(w)}}{1-z_2 \bar{w}_2} + \frac{ \theta(z)\bar{w}_1 \overline{ T_{\bar{z}_1}\theta (w)}}{1-z_2 \bar{w}_2}. \label{eqn:Min1}  
 \end{eqnarray}
 The last equation follows from setting $w_1=0$ and observing that
\[ \frac{1}{1-z_2 \bar{w}_2}  = \sum_{i=1}^n \frac{f_1(z)}{p(z)} \frac{\overline{ f_i(0,w_2)}}{p(0,z_2)} + \frac{ g(z) \overline{g(0,w_2)}}{p(z) \overline{p(0,w_2)}(1-z_2 \bar{w}_2)} + \frac{ \theta(z) \overline{\theta(0,w_2)}}{1-z_2 \bar{w}_2}.\]
Since \eqref{eqn:Min1} is conjugate-analytic in $w$, we can divide both sides by $\bar{w}_1$ to obtain 
\begin{equation} \label{eqn:Min2}  z_1 \frac{K^{min}_2(z,w)}{1-z_2\bar{w}_2} =   \sum_{i=1}^n \frac{f_i(z)}{p(z)} \overline{ T_{\bar{z}_1} \tfrac{f_i}{p}(w)} + \frac{ \frac{g(z)}{p(z)} \overline{ T_{\bar{z}_1}  \frac{g}{p}(w)}}{1-z_2 \bar{w}_2} + \frac{ \theta(z) \overline{T_{\bar{z}_1} \theta (w)}}{1-z_2 \bar{w}_2}. \end{equation}
 Fixing $w$ and projecting onto $\mathcal{K}_{\theta}$ gives
 \[ S_{z_1} \frac{K^{min}_2(z,w)}{1-z_2\bar{w}_2} =\sum_{i=1}^n \frac{f_i(z)}{p(z)} \overline{ T_{\bar{z}_1} \tfrac{f_i}{p}(w)} + \frac{ \frac{g(z)}{p(z)} \overline{ T_{\bar{z}_1} \frac{g}{p}(w)}}{1-z_2 \bar{w}_2}. 
 \]
Recalling the definition of $K_w^2$ and applying $S_{z_1}^*$ gives:
 \[ S_{z_1}^* S_{z_1} K^2_w(z) =\sum_{i=1}^n T_{\bar{z}_1} \tfrac{f_i}{p}(z) \overline{ T_{\bar{z}_1} \tfrac{f_i}{p}(w)} + \frac{ T_{\bar{z}_1}\frac{g}{p}(z) \overline{ T_{\bar{z}_1} \frac{g}{p}(w)}}{1-z_2 \bar{w}_2}.  \]
 Now, we  consider $S_{z_1} S_{z_1}^* K^2_w.$  First, by Theorem \ref{thm:rational}, $\deg g \le (0, n)$ and so $g(z)$ is a function of only $z_2$. This means we can calculate:  
\[  S_{z_1}^* K_w^2(z) = \frac{ T_{\bar{z}_1}\frac{g}{p}(z) \overline{ \frac{g}{p}(w)}}{1-z_2 \bar{w}_2} =
 \frac{g(z)\overline{g(w)}}{\overline{p(w)}(1-z_2\bar{w}_2)} \left( \frac{ \tfrac{1}{p(z)} -  \tfrac{1}{p(0,z_2)}}{z_1} \right) = \frac{-T_{\bar{z}_1}p(z) }{p(0,z_2)} K_w^2(z). \]
Then, using \eqref{eqn:Min2} we have
\[ 
\begin{aligned}
z_1 S^*_{z_1} K_w^2(z) 
&=  \left( \frac{-T_{\bar{z}_1}p(z) }{p(0,z_2)} \right) z_1 \frac{K^{min}_2(z,w)}{1-z_2\bar{w}_2} \\
&=  \frac{-T_{\bar{z}_1}p(z) }{p(0,z_2)} \left( \sum_{i=1}^n\frac{f_i(z)}{p(z)} \overline{ T_{\bar{z}_1} \tfrac{f_i}{p}(w)} + \frac{ \frac{g(z)}{p(z)} \overline{ T_{\bar{z}_1} \frac{g}{p}(w)}}{1-z_2 \bar{w}_2}+ \frac{ \theta(z) \overline{T_{\bar{z}_1}\theta(w)}}{1-z_2 \bar{w}_2} \right).  
\end{aligned}
\]
Now, by Lemma 9.1 in \cite{bk13}, since $\theta = \tilde{p} /p$, we can choose $p$ to have finitely many zeros on $\mathbb{T}^2.$ Then by Proposition 4.9.1 in \cite{Rud69}, if $p(0,z_2)$ has a zero at $(0,\tau_2)$ for some $\tau_2$(which means $\theta$ has a singular point there), then $\theta$ has a singular point at $(\tau_1, \tau_2)$ for every $\tau_1$ in $\mathbb{T}.$ This cannot happen, as  the singular points of $\theta$ occur only at the finite number of zeros of $p$. Thus, $p(0,z_2)$ has no zeros on $\overline{\mathbb{D}}$ and so $\frac{1}{p(0,z_2)}$ is in $H^{\infty}(\mathbb{D}).$ Now, we can calculate: 
 \[ 
 \begin{aligned}
 S_{z_1} S^*_{z_1} K^2_w(z) &= P_{\theta} \left(  \frac{-T_{\bar{z}_1}p(z) }{p(0,z_2)}  \sum_{i=1}^n \frac{f_i(z)}{p(z)} \overline{ T_{\bar{z}_1} \tfrac{f_i}{p}(w)} + \frac{-T_{\bar{z}_1}p(z) }{p(0,z_2)}\frac{ \frac{g(z)}{p(z)} \overline{ T_{\bar{z}_1}  \frac{g}{p}(w)}}{1-z_2 \bar{w}_2} \right) \\
 & =  \sum_{i=1}^n P_{\theta} \left(\frac{-T_{\bar{z}_1}p(z) }{p(0,z_2)} \frac{f_i(z)}{p(z)} \right)  \overline{ T_{\bar{z}_1} \tfrac{f_i}{p}(w)}   +
 \frac{  \overline{T_{\bar{z}_1} \frac{g}{p}(z) T_{\bar{z}_1} \frac{g}{p}(w)}}{1-z_2 \bar{w}_2},
 \end{aligned}
 \]
since $-\frac{T_{\bar{z}_1}p(z)}{p(0,z_2)} \frac{g(z)}{p(z)} = T_{\bar{z}_1} \frac{g}{p}(z).$ So, we can immediately calculate
 \[
 \begin{aligned}
   \left[ S^*_{z_1}, S_{z_1} \right] K^2_w(z) &=  \left(\sum_{i=1}^n T_{\bar{z}_1} \tfrac{f_i}{p}(z) \overline{ T_{\bar{z}_1} \tfrac{f_i}{p}(w)} + \frac{ T_{\bar{z}_1}\frac{g}{p}(z) \overline{ T_{\bar{z}_1} \frac{g}{p}(w)}}{1-z_2 \bar{w}_2} \right)\\
 &   \ \ \ \ -  
 \sum_{i=1}^n P_{\theta} \left(\frac{-T_{\bar{z}_1}p(z) }{p(0,z_2)} \frac{f_i(z)}{p(z)} \right)  \overline{ T_{\bar{z}_1} \tfrac{f_i}{p}(w)}   +
 \frac{  \overline{T_{\bar{z}_1} \frac{g}{p}(z) T_{\bar{z}_1} \frac{g}{p}(w)}}{1-z_2 \bar{w}_2}
 \\
 & = \sum_{i=1}^n P_{\theta} \left(\frac{T_{\bar{z}_1}p(z) }{p(0,z_2)} \frac{f_i(z)}{p(z)} + T_{\bar{z}_1} \tfrac{f_i}{p}(z) \right)\overline{ T_{\bar{z}_1} \tfrac{f_i}{p}(w)}.
 \end{aligned} 
 \]
 Since 
 \[  T_{\bar{z}_1} \frac{f_i}{p}(z) = \frac{-T_{\bar{z}_1}p(z)}{p(0,z_2)} \frac{f_i(z)}{p(z)} + \frac{ T_{\bar{z}_1}f_i(z)}{p(0,z_2)}, \]
 we can simplify the main equation to 
 \[  \left[ S^*_{z_1}, S_{z_1} \right] K^2_w(z) = 
 \sum_{i=1}^n P_{\theta}  \left(\frac{ T_{\bar{z}_1}f_i(z)}{p(0,z_2)} \right)\overline{ T_{\bar{z}_1} \tfrac{f_i}{p}(w)}.  \] 
 Combining this with the result for $K^1_w$ gives: 
 \[ \left[ S^*_{z_1}, S_{z_1} \right]  K_w(z) =  P_{\theta}  \left(  \frac{1}{p(0,z_2)}\sum_{i=1}^n T_{\bar{z}_1}f_i(z) \overline{ T_{\bar{z}_1} \tfrac{f_i}{p}(w)} +f_i(0,z_2)\frac{ \overline{f_i(w)}}{\overline{p(w)}} \right).\]
 Recall that $\deg f_i \le (1, n-1)$. Thus, $\deg T_{\bar{z}_1}f_i \le (0,n-1)$ and  $\deg T_{\bar{z}_1}f_i(0,z_2) \le (0,n-1).$ This means that the set of linear combinations of functions of the form 
 \[ \sum_{i=1}^n T_{\bar{z}_1}f_i(z) \overline{ T_{\bar{z}_1} \tfrac{f_i}{p}(w)} -f_i(0,z_2)\frac{ \overline{f_i(w)}}{\overline{p(w)}}\]
 has at most dimension $n$. Thus, the set of linear combinations of the functions 
  \[  \left[ S^*_{z_1}, S_{z_1} \right]  K_w(z)  \]
  has at most dimension $n$. Since linear combinations of the reproducing kernel functions are dense in $\mathcal{K}_{\theta}$, this implies that the rank of $[S^*_{z_1}, S_{z_1}]$ is at most $n$.
 \end{proof}

Conversely, using previous results about Agler kernels, it is not difficult to generalize the arguments from Section \ref{s-exa}
to conclude the $(\Rightarrow)$-statement. Again, we record this result with the following auxiliary theorem:
 
 \begin{theorem}\label{thm:FiniteRank} Let $\theta$ be an inner function on $\mathbb{D}^2$. Then, if $[S^*_{z_1}, S_{z_1}]$ has rank $n$ on $\mathcal{K}_{\theta},$ then $\theta$ is a rational inner function of degree less than or equal to $(1,n)$. 
 \end{theorem}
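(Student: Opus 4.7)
The plan is to reduce the theorem to two dimensional bounds, namely $\dim\mathcal{H}(K^{max}_1)\le n$ and $\dim\mathcal{H}(K^{min}_2)\le 1$, and then appeal to Theorems \ref{thm:rational3} and \ref{thm:rational}: the first yields that $\theta$ is rational inner, and the second translates the dimensional bounds into $\deg\theta\le(1,n)$.

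For the first bound, I would exploit that $\Smax_1$ is $z_1$-invariant, so $S_{z_1}|_{\Smax_1}$ is multiplication by $z_1$, an isometry. Using $S^*_{z_1}=T_{\bar z_1}|_{\mathcal{K}_\theta}$ and the identity $z_1 T_{\bar z_1}f=f-f(0,z_2)$, a direct calculation gives
\[
[S^*_{z_1},S_{z_1}]f \;=\; P_\theta\bigl(f(0,z_2)\bigr),\qquad f\in\Smax_1.
\]
The rank of this restricted operator is at most $n$, so it suffices to show $f\mapsto P_\theta(f(0,z_2))$ is injective on $\mathcal{H}(K^{max}_1)=\Smax_1\ominus z_1\Smax_1$. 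The easy case $f(0,z_2)=0$ forces $f\in z_1\Smax_1$ and hence $f=0$. The delicate case is $f(0,z_2)\neq 0$ with $f(0,z_2)\in\theta H^2(\mathbb{D}^2)$. Writing $f(0,z_2)=\theta h$ with $h\in H^2(\mathbb{D}^2)$, the boundary identity $|h|=|f(0,\cdot)|$ on $\mathbb{T}^2$ together with the uniqueness of bidisk outer functions forces $h=I_h(z_1,z_2)\tilde O(z_2)$ for some two-variable inner $I_h$ and one-variable outer $\tilde O$. Then $\theta\cdot I_h=f(0,z_2)/\tilde O(z_2)$ is a product of bidisk-inner functions that depends only on $z_2$. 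A boundary-slice argument --- for a.e.\ $z_2\in\mathbb{T}$ the factors $\theta(\cdot,z_2)$ and $I_h(\cdot,z_2)$ are one-variable inner with constant product, and the standard fact that two one-variable inner functions whose product is constant must themselves be constant --- yields $\theta=\theta(z_2)$. In this remaining case $\Smax_1=\mathcal{K}_\theta$, $\mathcal{H}(K^{max}_1)=\mathcal{K}^2_\theta$, and $P_\theta(f(0,z_2))=f$ directly, closing the injectivity. Hence $\dim\mathcal{H}(K^{max}_1)\le n$.

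For the second bound, I would analyze $[S^*_{z_1},S_{z_1}]$ acting on the $z_2$-invariant decomposition $\Smin_2=\bigoplus_{k\ge 0}z_2^k\mathcal{H}(K^{min}_2)$, generalizing the second-component calculation in the proof of Proposition \ref{t-prodessnorm}. The commutator's compression to $\Smin_2$ should factor, modulo lower-order corrections, as an operator on $\mathcal{H}(K^{min}_2)$ tensored with the identity on the infinite-dimensional $z_2$-shift family $\{z_2^k\}_{k\ge 0}$; for the total rank to remain $\le n$, the compression to $\mathcal{H}(K^{min}_2)$ must vanish. A normality-type analysis, paralleling the step in Proposition \ref{t-prodessnorm} that forced $\phi$ to be a single Blaschke factor, then yields $\dim\mathcal{H}(K^{min}_2)\le 1$. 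Combining the two bounds with Theorems \ref{thm:rational3} and \ref{thm:rational} concludes that $\theta$ is rational inner of degree $(\dim\mathcal{H}(K^{min}_2),\dim\mathcal{H}(K^{max}_1))\le (1,n)$.

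The main obstacles are twofold: the bidisk inner-outer analysis for the injectivity in the first bound, which hinges on the lemma that a product of two bidisk-inner functions depending only on $z_2$ must have each factor depending only on $z_2$; and generalizing the product-case tensor decoupling in the second bound, since $\Smin_2$ is only $z_2$-invariant and $S_{z_1}$ couples across the wandering summands $z_2^k\mathcal{H}(K^{min}_2)$ in a non-trivial way for general (non-product) $\theta$.
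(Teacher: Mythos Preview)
Your overall strategy matches the paper's: bound $\dim\mathcal{H}(K^{max}_1)\le n$ and $\dim\mathcal{H}(K^{min}_2)\le 1$, then invoke Theorem~\ref{thm:rational3}. But both halves of your execution diverge from the paper, one harmlessly and one fatally.

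For the first bound, your inner--outer factorization detour is correct in spirit but needlessly heavy, and the bidisk inner--outer theory you lean on is delicate. The paper dispatches the ``delicate case'' in one line: if $f\in\mathcal{H}(K^{max}_1)\subset\mathcal{K}_\theta$ and $f(0,z_2)=\theta g$, then
\[
\|f(0,z_2)\|^2 \;=\; \langle f,\, f(0,z_2)\rangle \;=\; \langle f,\, \theta g\rangle \;=\; 0,
\]
since $f\perp\theta H^2(\mathbb{D}^2)$. So $f(0,z_2)=0$ after all, collapsing your delicate case into the easy one. No factorization, no boundary slices.

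For the second bound you have a genuine gap. The hoped-for tensor structure $[S^*_{z_1},S_{z_1}]|_{\Smin_2}\approx C\otimes I_{z_2\text{-shift}}$ simply does not hold for general $\theta$: the operator $S_{z_1}$ does not even preserve $\Smin_2$, and there is no reason the commutator should decouple across the summands $z_2^k\mathcal{H}(K^{min}_2)$. The paper does something quite different. It fixes a single normalized $f\in\mathcal{H}(K^{min}_2)$, uses the rank bound to find a polynomial $p$ of degree $\le n$ with $[S^*_{z_1},S_{z_1}]\bigl(p(z_2)f\bigr)=0$, and then computes this commutator explicitly via Lemma~\ref{lem:ShiftForm}. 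The resulting identity, after some manipulation using $f\perp z_2\Smin_2$, expresses $\theta$ as a ratio of functions that are affine in $z_1$, so that for a.e.\ $t\in\mathbb{T}$ the slice $\theta(\cdot,t)$ is a rational inner function of degree $\le 1$ in $z_1$. Now Theorem~\ref{thm:restriction} embeds $\mathcal{H}(K^{min}_2)$ isometrically into the one-variable model space $H^2(\mathbb{T})\ominus\theta(\cdot,t)H^2(\mathbb{T})$, which has dimension $\le 1$. That is the mechanism you are missing: not a tensor argument, but an explicit formula for $\theta$ extracted from a single well-chosen kernel element, followed by the slice-restriction theorem.
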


 \begin{proof}
 Assume $[ S^*_{z_1}, S_{z_1}]$ has rank $n$ on $\mathcal{K}_{\theta}.$ We will show that 
 \[ \dim \mathcal{H}(K^{max}_1) \le n \text{ and } \dim \mathcal{H}(K^{min}_2)  \le 1.\] 
 Then Theorem \ref{thm:rational3} will imply that $\theta$ is a rational inner function of degree at most $(1,n).$
 
For the remainder of the proof, we establish these dimension bounds. First observe that if $f \in \Smax_1$, then 
 \begin{align}\label{e-commproj}
 \left[ S^*_{z_1}, S_{z_1} \right] f = f - P_{\theta}z_1 T_{z_1} f   = P_{\theta} f(0,z_2). 
 \end{align}
 Proceeding towards a contradiction, assume $\dim \mathcal{H}(K^{max}_1) >n.$ Then, we can find a function $f \in  \mathcal{H}(K^{max}_1)$ such that $\| f\|^2_{H^2} = 1$ and $f \in \ker [S^*_{z_1}, S_{z_1}]$.  This implies
 $f(0,z_2) \in \theta H^2(\mathbb{D}^2)$ and so
 \[ f(z) = z_1 T_{\bar{z}_1} f(z) + f(0,z_2) =  z_1 T_{\bar{z}_1} f(z) + \theta(z) g(z), \]
for some $g \in H^2(\mathbb{D}^2)$. But, by orthogonality 
\[ \| f(0,z_2) \|^2_{H^2} = \left \langle f(z),  f(0,z_2) \right \rangle_{H^2} =  
\left \langle f(z), \theta(z) g(z) \right \rangle_{H^2} =0,\]
and so $f(z) = z_1 T_{\bar{z}_1} f(z).$  Since $T_{\bar{z}_1}f$ is in $\mathcal{K}_{\theta}$ and $z_1T_{\bar{z}_1}f = f$ is in $\mathcal{K}_{\theta}$, we must have $T_{\bar{z}_1}f$ in $\Smax_1$ and $f \in z_1 \Smax_1.$ Since $f  \in \mathcal{H}(K^{max}_1)$, this means $f \perp z_1 \Smax_1$ and since $f$ is orthogonal to itself, $f \equiv 0,$ which gives the contradiction.

Now consider $\mathcal{H}(K^{min}_2).$ If $\dim \mathcal{H}(K^{min}_2) = 0$, we are done. If $\dim \mathcal{H}(K^{min}_2) \ge 1$, we can pick a normalized function $f\in \mathcal{H}(K^{min}_2).$ Then $\{ z^m_2 f \}_{m=0}^{n}$ is an orthonormal set in $\Smin_2.$ Since $\left[ S^*_{z_1}, S_{z_1} \right]$ has rank $n$, one can find a polynomial $p$ of degree at most $n$ such that
\begin{align}
0 &= \left[ S^*_{z_1}, S_{z_1} \right] p(z_2)f(z) \nonumber \\
&= p(z_2)f(0,z_2) 
-\left \langle p(w_2)f(w),  \frac{T_{\bar{z}_1} \theta(w) }{1-\bar{z}_2 w_2} \right \rangle_{H^2} T_{\bar{z}_1} \theta(z) +
\left \langle p(w_2)T_{\bar{z}_1}f(w), \frac{T_{\bar{z}_1} \theta(w) }{1-\bar{z}_2 w_2}\right \rangle_{H^2} \theta(z) \nonumber,
\end{align}
where the last line used Lemma \ref{lem:ShiftForm}. Now, observe that since $f \perp z_2 \Smin_2$, we can compute 
\[
\begin{aligned}
& \left \langle p(w_2)f(w),  \frac{T_{\bar{z}_1} \theta(w) }{1-\bar{z}_2 w_2} \right \rangle_{H^2} \\
& \qquad  = 
\left \langle p(w_2)f(w), w_2^{n+1} \bar{z}_2^{n+1} \frac{T_{\bar{z}_1} \theta(w) }{1-\bar{z}_2 w_2} \right \rangle_{H^2} +  \sum_{k=0}^{n} z_2^k \left \langle p(w_2)f(w),  w_2^k T_{\bar{z}_1} \theta(w)  \right \rangle_{H^2}\\
& \qquad = \sum_{k=0}^{n} z_2^k \left \langle p(w_2)f(w),  w_2^kT_{\bar{z}_1} \theta(w)  \right \rangle_{H^2} \\
& \qquad = -q(z_2), 
\end{aligned}
\]
where $q$ is a polynomial $q(z_2)$ with $\deg q$ at most $n.$ Similarly we can compute:
\[ 
\begin{aligned}
\left \langle p(w_2)T_{\bar{z}_1}f(w), \frac{T_{\bar{z}_1} \theta(w) }{1-\bar{z}_2 w_2}\right \rangle_{H^2} \theta(z) & = \left \langle p(w_2)f(w), \frac{w_1T_{\bar{z}_1} \theta(w) }{1-\bar{z}_2 w_2}\right \rangle_{H^2} \theta(z)  \\
& = - \left \langle p(w_2)f(w), \frac{ \theta(0,w_2) }{1-\bar{z}_2 w_2}\right \rangle_{H^2} \theta(z)  \\
& = - \left \langle p(w_2)f(0, w_2), \frac{ \theta(w) }{1-\bar{z}_2 w_2}\right \rangle_{H^2} \theta(z) \\
& = - P_{\theta H^2_2(\mathbb{D})} p(z_2)f(0,z_2). 
\end{aligned}
\]
Substituting those computations back into our previous equation gives:
\[0 = \left[ S^*_{z_1}, S_{z_1} \right] p(z_2)f(z)=  p(z_2)f(0,z_2) + q(z_2) T_{\bar{z}_1} \theta(z) +h(z_2)\theta(z)\label{e-constant}\]
for some $h \in H_2^2(\mathbb{D}).$ Then $\theta$ is rational of degree one in $z_1$ since solving the above equation for $T_{\bar{z}_1} \theta$ and substituting that into the following equation for $\theta$ gives:
\begin{align*}
\theta(z) &= z_1 T_{\bar{z}_1} \theta(z) + \theta(0,z_2) \\
&= z_1 \frac{ - p(z_2)f(0,z_2)- \theta(0,z_2)h(z_2)}{q(z_2) +z_1h(z_2)} + \theta(0,z_2) \\
&= \frac{ -z_1 p(z_2)f(0,z_2) + \theta(0,z_2)q(z_2)  }{q(z_2) +z_1h(z_2)}.
\end{align*}
Since $p, f(0,\cdot), \theta(0, \cdot), q,$ and $ h$ are in $H_2^2(\mathbb{D}),$ they all have non-tangential boundary limits at a.e.~$t\in \mathbb{T}$. Furthermore, it is easy to show that the function
\[ \Psi(z_1,t) \equiv \frac{ -z_1 p(t)f(0,t) + \theta(0,t)q(t)  }{q(t) +z_1h(t)}
\]
is bounded and analytic for almost every $t$. Indeed a simple computation shows that a zero in the denominator implies that the function must be constant in $z_1$.  Now, for almost every $t$ in $\mathbb{T}$, recall that $\theta(z_1, t)$ denotes the unique $H^2(\mathbb{D})$ function whose boundary values are $\theta(t_1, t)$ for a.e.~$t_1 \in \mathbb{T}.$  Observe that $\Phi(z_1,t)$ and $\theta(z_1,t)$ have the same boundary values a.e. Namely:
\[ 
\begin{aligned}
\lim_{r \nearrow 1} \theta(rt_1, t) &= \lim_{r \nearrow 1} \frac{ -r t_1 p(t)f(0,rt) + \theta(0,rt)q(rt)  }{q(rt) +rt_1h(rt)} \\
&= \lim_{r \nearrow 1} \frac{ t_1 p(t)f(0,t) + \theta(0,t)q(t)  }{q(t) +rt_1h(t)} \\
&= \lim_{r \nearrow 1} \Phi(z_1,t). 
\end{aligned}
\] 
Therefore, for a.e.~$t\in \mathbb{T}$, these one variable functions must agree:
\[
\theta(z_1, t) =  \Psi(z_1,t) = \frac{ -z_1 p(t)f(0,t) + \theta(0,t)q(t)  }{q(t) +z_1h(t)}.
\]
It follows that $\theta(z_1, t)$ has degree $1$ in $z_1.$
This means $H^2(\mathbb{T}) \ominus \theta(\cdot, t) H^2(\mathbb{T})$ is at most a one dimensional space, and so, Theorem \ref{thm:restriction} implies that $\mathcal{H}(K^{min}_2)$ is at most one dimensional. As mentioned already, the result follows immediately from the proven dimension bounds.
 \end{proof}
 
 To obtain Theorem  \ref{t-IFF}, we basically combine Theorems \ref{t-2n} and \ref{thm:FiniteRank} using several basic arguments:
 
 \begin{proof}[Proof of Theorem \ref{t-IFF}]
First, assume $\theta$ is rational inner of degree $(1,n)$. Then by Theorem \ref{t-2n}, $[S^*_{z_1}, S_{z_1}]$ has rank at most $n$. Proceeding to a contradiction, assume that the rank equals some $N$ strictly less than $n$. Then by Theorem \ref{thm:FiniteRank}, $\theta$ is rational inner of degree at most $(1,N)$, which is a contradiction. Thus, the rank of the commutator must be $n$. Similarly, if $\theta$ is rational inner of degree $(0,n)$, the same argument shows that rank$[S^*_{z_1}, S_{z_1}] =n.$

 Conversely, assume that the commutator has rank $n$ on $\mathcal{K}_{\theta}$, for some inner function $\theta$. Then by Theorem \ref{thm:FiniteRank}, $\theta$ is rational inner of degree at most $(1,n)$. Proceeding towards a contradiction, assume that the degree of $\theta$ is at most $(1,N),$ where $N$ is strictly less that $n$. Then by Theorem \ref{t-2n}, the rank of the commutator is strictly less than $n$, which is a contradiction. Thus, the degree of $\theta$ must be either $(0,n)$ or $(1,n)$.
 \end{proof}
 
 
\subsection{Open questions} There are many related questions yet to be answered. Here, we describe two:

(a) In the product case, namely when $\theta(z) = \phi(z_1) \psi(z_2)$, Corollary \ref{c-essnorm} states that the essential normality of $S_{z_1}$ implies $[S_{z_1}^*, S_{z_1}]$ has finite rank. The results in this section do not allow us to conclude this in general. Thus, the following question remains:
\begin{center}
Does essential normality of $S_{z_1}$ imply that $[S_{z_1}^*, S_{z_1}]$ is finite rank?
\end{center}
This is a particularly intriguing question in light of the Guo-Wang result \cite{gw09}, which says that the joint essential normality of $S_{z_1}$ and $S_{z_2}$ does imply finite rank.

(b) It also seems difficult to extend Proposition \ref{p-spectrum} from the product case to the general case. Concerning part (c), even the spectrum of the commutator on $\Smax_1$ is interesting. In view of equation \eqref{e-commproj}, one can embed this question into a larger framework by interpreting the map $f\mapsto f(0,z_2)$ as a projection. It was suggested to us in private communications with R.G. Douglas that one should then ask the very general, rather attractive question: \begin{center}
For which $\theta, \vartheta$ is the projection $P_\theta P_\vartheta$ finite rank?
\end{center}
Alternatively, it may be possible to generalize Proposition \ref{p-spectrum} to rational inner functions $\theta$, as they are typically more tractable.

\section{Reducing Agler Subspaces for $S_{z_1}$  --- The Proof of Theorem \ref{t-iff}}\label{s-reducing}
 When $\theta$ is a product of one variable inner functions, namely $\theta(z)= \phi(z_1)\psi(z_2)$, then the compressed shifts $S_{z_1}$ and $S_{z_2}$ have simple reducing subspaces. Namely, in Subsection \ref{ss-prodreducing}, we proved that $\Smax_1$ and $\Smin_2$ are reducing for $S_{z_1},$ and $\Smax_2$ and $\Smin_1$ are reducing for $S_{z_2}.$ 
 
 This motivates the following question. Let $\theta$ be an arbitrary inner function and let $(K_1, K_2)$ be Agler kernels of $\theta.$ Then:
 \begin{center} \emph{ When are 
$\mathcal{H} \left(\frac{K_1(z,w)}{1-z_1\bar{w}_1} \right)$ and $\mathcal{H} \left(\frac{K_2(z,w)}{1-z_2\bar{w}_2} \right)$
 reducing subspaces for $S_{z_1}$?} \end{center}
 
 One should note that this question only considers when Agler subspaces are reducing subspaces. Indeed, a characterization of the inner functions $\theta$ for which $S_{z_1}$ and/or $S_{z_2}$ have reducing subspaces on the model space $\mathcal{K}_{\theta}$ seems difficult with the techniques at hand.
 
 \subsection{Reducing Subspaces and Agler Kernels}

It is actually easy to characterize when Agler kernels $(K_1,K_2)$ are associated to reducing subspaces. The result is as follows:

 \begin{theorem}\label{thm:Reducing1} Let $(K_1, K_2)$ be Agler kernels of $\theta.$ Then the Agler subspaces
 \[ \mathcal{S}_1 \equiv \mathcal{H} \left(\frac{K_1(z,w)}{1-z_1\bar{w}_1} \right) \ \text{ and } \ \mathcal{S}_2 \equiv\mathcal{H} \left(\frac{K_2(z,w)}{1-z_2\bar{w}_2} \right)  \]
 are reducing subspaces for $S_{z_1}$ if and only if they are subspaces of $\mathcal{K}_{\theta}$ and $K_1(z,w)$ is a function of only $z_2$ and $\bar{w}_2$.
 The analogous statement holds for $S_{z_2}.$
 \end{theorem}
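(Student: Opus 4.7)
My plan is to reduce the (two-sided) reducing condition to an equivalent double $z_1$-invariance statement for $\mathcal{S}_1$ as a subspace of $H^2(\mathbb{D}^2)$, and then invoke the classical tensor-product structure of subspaces that are invariant under both the shift and its adjoint.

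First I would establish the reduction. The kernel of $\mathcal{S}_1$ factors as the positive kernel $K_1$ times the Szeg\H{o} kernel $(1-z_1\bar{w}_1)^{-1}$ in $z_1$, so $z_1$ is a contractive multiplier on $\mathcal{S}_1$; together with $\mathcal{S}_1\subseteq\mathcal{K}_\theta$ this makes $\mathcal{S}_1$ automatically $T_{z_1}$-invariant and hence $S_{z_1}$-invariant. Equation \eqref{eqn:akernels} writes the reproducing kernel of $\mathcal{K}_\theta$ as the sum of the reproducing kernels of $\mathcal{S}_1$ and $\mathcal{S}_2$; since both $\mathcal{S}_i$ sit isometrically in $\mathcal{K}_\theta$, Aronszajn's sum-of-kernels theorem upgrades this to an orthogonal direct sum $\mathcal{K}_\theta=\mathcal{S}_1\oplus\mathcal{S}_2$. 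Both spaces are then reducing for $S_{z_1}$ precisely when $\mathcal{S}_2$ is $S_{z_1}$-invariant, and writing out $S_{z_1}=P_\theta T_{z_1}$ together with the $T_{\bar{z}_1}$-invariance of $\mathcal{K}_\theta$ recasts that condition as $T_{\bar{z}_1}\mathcal{S}_1\subseteq\mathcal{S}_1$. So the problem collapses to characterizing when $\mathcal{S}_1$ is simultaneously invariant under $T_{z_1}$ and $T_{\bar{z}_1}$ inside $H^2(\mathbb{D}^2)$.

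The $(\Leftarrow)$ direction is then immediate: if $K_1(z,w)=\kappa(z_2,\bar{w}_2)$ for some positive kernel $\kappa$ on $\mathbb{D}\times\mathbb{D}$, the kernel of $\mathcal{S}_1$ is the product $\kappa(z_2,\bar{w}_2)\cdot(1-z_1\bar{w}_1)^{-1}$, so $\mathcal{S}_1=\mathcal{H}(\kappa)\otimes H^2_1(\mathbb{D})$, and both $T_{z_1}$ and $T_{\bar{z}_1}$ act only on the second tensor factor. For the $(\Rightarrow)$ direction, I would classify doubly $z_1$-invariant subspaces of $H^2(\mathbb{D}^2)\cong H^2_2(\mathbb{D})\otimes H^2_1(\mathbb{D})$. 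The operator identity $I-T_{z_1}T_{\bar{z}_1}=E_0$, where $(E_0 f)(z_1,z_2)=f(0,z_2)$ is the orthogonal projection onto functions of $z_2$ alone, combined with the double invariance gives $E_0\mathcal{S}_1\subseteq\mathcal{S}_1$. Iterating with powers of $T_{\bar{z}_1}$ shows that every Taylor coefficient $f_n(z_2)$ of any $f=\sum_n f_n(z_2)\,z_1^n\in\mathcal{S}_1$ lies in $\mathcal{M}\equiv E_0\mathcal{S}_1\subseteq H^2_2(\mathbb{D})$, while $T_{z_1}$-invariance repacks all such Taylor series back into $\mathcal{S}_1$. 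This forces $\mathcal{S}_1=\mathcal{M}\otimes H^2_1(\mathbb{D})$, whose reproducing kernel equals $K_\mathcal{M}(z_2,w_2)\cdot(1-z_1\bar{w}_1)^{-1}$; matching kernels gives $K_1(z,w)=K_\mathcal{M}(z_2,w_2)$, a function of only $z_2$ and $\bar{w}_2$. The statement for $S_{z_2}$ follows verbatim with the roles of $z_1$ and $z_2$ interchanged.

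The main obstacle I expect is the reduction in the first paragraph: extracting an orthogonal (rather than merely set-theoretic) decomposition of $\mathcal{K}_\theta$ from the Agler identity, and then cleanly translating $S_{z_1}$-invariance of $\mathcal{S}_2$ into $T_{\bar{z}_1}$-invariance of $\mathcal{S}_1$, requires some bookkeeping with inner-product conventions. Once this equivalence is in place, the tensor-product classification and the kernel-matching argument finish both implications in one stroke.
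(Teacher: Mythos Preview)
Your argument is correct, but it takes a genuinely different route from the paper's. For the backward direction the two proofs are morally the same. For the forward direction, however, the paper does not pass through a classification of doubly $T_{z_1}$/$T_{\bar z_1}$-invariant subspaces. Instead it rewrites the Agler identity as
\[
K_1(z,w)=\frac{z_1\bar w_1 K_2(z,w)}{1-z_2\bar w_2}+\frac{1}{1-z_2\bar w_2}-\frac{K_2(z,w)}{1-z_2\bar w_2}-\frac{\theta(z)\overline{\theta(w)}}{1-z_2\bar w_2},
\]
applies $S_{z_1}^*$ in the $z$-variable, and then invokes Propositions~3.4 and~3.5 of \cite{b12} to place each surviving term in $\mathcal{S}_2$; the reducing hypothesis (equivalently, $P_{\mathcal{S}_2}S_{z_1}^*|_{\mathcal{S}_1}=0$) then forces $S_{z_1}^*K_1(\cdot,w)\equiv 0$, so $K_1(\cdot,w)$ depends only on $z_2$. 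Your reduction to the equivalent condition $T_{\bar z_1}\mathcal{S}_1\subseteq\mathcal{S}_1$ and subsequent tensor-product characterization via $E_0=I-T_{z_1}T_{\bar z_1}$ is more self-contained---it avoids the citation to \cite{b12} and makes the structural reason (doubly invariant subspaces in the $z_1$-variable must split as $\mathcal{M}\otimes H^2_1(\mathbb{D})$) transparent. The paper's approach, on the other hand, keeps the Agler kernel identity front and center and is closer in spirit to the rest of the paper's kernel-manipulation techniques. The ``obstacle'' you flag (upgrading the kernel sum to an orthogonal decomposition) is handled exactly as you sketch: once both $\mathcal{S}_i$ sit isometrically in $\mathcal{K}_\theta$, projecting the reproducing kernel of $\mathcal{K}_\theta$ onto $\mathcal{S}_1$ returns the $\mathcal{S}_1$-kernel, so the remainder lies in $\mathcal{S}_1^\perp$ and the sum is orthogonal.
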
 

 \begin{proof} 
($\Leftarrow$)   First, observe that since $S_{z_1} \left( \mathcal{S}_1 \right) \subseteq \mathcal{S}_1$, then $\mathcal{S}_1$ and $\mathcal{S}_2$ are reducing subspaces for $S_{z_1}$ if and only if $P_{\mathcal{S}_1} S_{z_1}|_{\mathcal{S}_2}$ is the zero operator, which occurs if and only if $P_{\mathcal{S}_2} S^*_{z_1}|_{\mathcal{S}_1}$ is the zero operator. 

 Now, assume  $K_1(z,w)$ is a function of only $z_2$ and $\bar{w}_2.$ Fix $w\in \mathbb{D}^2$. Then
 \[ S_{z_1}^* \left( \frac{K_1(\cdot, w)}{1- \cdot  \ \bar{w}_1} \right)(z) =  \frac{\bar{w}_1 K_1(z,w)}{1-z_1 \bar{w}_1} \in \mathcal{S}_1. \]
 Since linear combinations of these reproducing kernel functions are dense in $\mathcal{S}_1$, this implies that $S_{z_1}^* \left (\mathcal{S}_1 \right) \subseteq \mathcal{S}_1$ and so $P_{\mathcal{S}_2} S_{z_1}^*|_{\mathcal{S}_{1}}$ is the zero operator, which means $\mathcal{S}_1$ and $\mathcal{S}_2$ are reducing.

($\Rightarrow$) Conversely, assume $\mathcal{S}_1$ and $\mathcal{S}_2$ are reducing subspaces. By manipulating \eqref{eqn:akernels}, one can obtain:
 \[ K_1(z,w) = \frac{z_1 \bar{w}_1 K_2(z,w)}{1-z_2\bar{w}_2}  + \frac{1}{1-z_2 \bar{w}_2} -  \frac{K_2(z,w)}{1-z_2\bar{w}_2} -   \frac{ \theta(z) \overline{\theta(w)}}{1-z_2 \bar{w_2}}. \]
 Fix $w \in \mathbb{D}^2$ and apply $S_{z_1}^*$ to obtain:
 \[  S_{z_1}^* \left( K_1(\cdot ,w) \right)(z) = \frac{ \bar{w}_1 K_2(z,w)}{1-z_2\bar{w}_2} -  \frac{ S_{z_1}^*\left(K_2(\cdot, w) \right) (z)}{1-z_2\bar{w}_2}  -   \frac{ S_{z_1}^*\theta(z) \overline{\theta(w)}}{1-z_2 \bar{w_2}}\,. \] 
As was shown in Propositions 3.4 and 3.5 in \cite{b12}, for each $w \in \mathbb{D}^2$
 \[ \frac{ S_{z_1}^*\left(K_2(\cdot, w) \right) (z)}{1-z_2\bar{w}_2}, \ \  \frac{ S_{z_1}^*\theta(z) \overline{\theta(w)}}{1-z_2 \bar{w_2}} \in \mathcal{S}_2. \]
Since $S_{z_1}^* \left( K_1(\cdot ,w) \right)$ is  a sum of functions in $\mathcal{S}_2$, it must be in $\mathcal{S}_2$ as well.  By assumption, since $\mathcal{S}_1$ and $\mathcal{S}_2$ are reducing, $P_{\mathcal{S}_2} S^*_{z_1}|_{\mathcal{S}_1}$ is the zero operator and so we must have
 \[ S_{z_1}^* \left( K_1(\cdot ,w) \right) \equiv 0, \]
This means that each $K_1(\cdot, w)$ is a function of only $z_2.$  As linear combinations of these functions are dense in $\mathcal{H}(K_1)$, all functions in $\mathcal{H}(K_1)$ are functions of only $z_2$. Now let $\{f_i\}$ be an orthonormal basis of $\mathcal{H}(K_1)$. Then the reproducing kernel $K_1$ satisfies  $K_1(z,w) = \sum f_i(z) \overline{f_i(w)},$ which implies $K_1(z,w)$ is a function of only $z_2$ and $\bar{w}_2.$  \end{proof}

\subsection{Products of Inner Functions and their Agler Kernels}

We would like to answer the question:~Which $\theta$ have Agler subspaces that are reducing for  $\mathcal{S}_1$ or $\mathcal{S}_2$? The previous result characterized the existence of such Agler reducing subspaces using properties of the Agler kernels. The following result provides the needed link between Theorem \ref{thm:Reducing1} and the question of interest, at least in the case when $\theta$ is rational inner.

\begin{theorem} \label{thm:Reducing2} Let $\theta$ be an inner function on $\mathbb{D}^2$. Then the following are equivalent:
\begin{itemize}
\item[(a)] The function $\theta$ is a product of of two one variable inner functions;
\item[(b)] There is some pair $(K_1,K_2)$ of Agler kernels of $\theta$ such that $K_1$ is a function of only $z_2$ and $\bar{w}_2$ and 
 \begin{equation} \label{eqn:lim} \lim_{r \nearrow 1} \left( 1- r^2 |\tau_1|^2 \right) K_2(r\tau_1, z_2, r\tau_1, w_2) = 0,\end{equation}
 for every $z_2, w_2 \in \mathbb{D}$ and almost every $\tau_1 \in \mathbb{T};$
 \item[(c)] There is some pair $(\widetilde K_1,\widetilde K_2)$ of Agler kernels of $\theta$ such that $\widetilde K_2$ is a function of only $z_1$ and $\bar{w}_1$ and 
 \begin{equation*} \lim_{r \nearrow 1} \left( 1- r^2 |\tau_2|^2 \right)\widetilde K_1(z_1, r\tau_2, w_1, r\tau_2) = 0,
 \end{equation*}
 for every $z_1, w_1 \in \mathbb{D}$ and almost every $\tau_2 \in \mathbb{T}.$
  \end{itemize}
 \end{theorem}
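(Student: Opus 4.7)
My plan is to prove (a)$\Leftrightarrow$(b); the equivalence (a)$\Leftrightarrow$(c) then follows by exchanging the roles of $z_1$ and $z_2$ throughout the argument.

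For (a)$\Rightarrow$(b) I appeal to the explicit computations in Section~\ref{s-exa}. When $\theta(z) = \phi(z_1)\psi(z_2)$, the canonical pair $(K_1^{\max}, K_2^{\min})$ reads
\[K_1^{\max}(z,w) = \frac{1-\psi(z_2)\overline{\psi(w_2)}}{1-z_2\bar w_2}, \qquad K_2^{\min}(z,w) = \psi(z_2)\overline{\psi(w_2)}\,\frac{1-\phi(z_1)\overline{\phi(w_1)}}{1-z_1\bar w_1},\]
so $K_1^{\max}$ depends only on $z_2, \bar w_2$ as required. Since $|\tau_1|=1$, one has
\[(1-r^2|\tau_1|^2)K_2^{\min}(r\tau_1, z_2, r\tau_1, w_2) = \psi(z_2)\overline{\psi(w_2)}(1-|\phi(r\tau_1)|^2),\]
which tends to $0$ as $r\nearrow 1$ for a.e.\ $\tau_1\in \mathbb{T}$, because $\phi$ is inner.

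For (b)$\Rightarrow$(a) the key move is to set $z_1 = w_1 = r\tau_1$ in the Agler decomposition $1-\theta(z)\overline{\theta(w)} = (1-z_1\bar w_1)K_2(z,w) + (1-z_2\bar w_2)K_1(z,w)$ and let $r\nearrow 1$. The hypothesis on $K_2$ makes its term vanish, the $K_1$ term is untouched (it depends only on $z_2,\bar w_2$), and slice-wise Fatou identifies the limit of $\theta(r\tau_1, z_2)$ for each fixed $z_2$. A countable-density approximation in $(z_2, w_2)$, combined with holomorphicity in $z_2$ and antiholomorphicity in $w_2$, consolidates the slice-dependent exceptional sets into a single null set of $\tau_1$. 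I therefore obtain, for a.e.\ $\tau_1\in \mathbb{T}$ and all $z_2, w_2 \in \mathbb{D}$,
\[1-\theta(\tau_1, z_2)\overline{\theta(\tau_1, w_2)} = (1-z_2\bar w_2)K_1(z_2, \bar w_2).\]
Thus $F(z_2, \bar w_2) := \theta(\tau_1, z_2)\overline{\theta(\tau_1, w_2)}$ is independent of $\tau_1$ on a common full-measure set. To extract the factorization, I choose any $w_2^*\in \mathbb{D}$ with $\theta(\cdot, w_2^*)\not\equiv 0$, so that $|\theta(\tau_1, w_2^*)|^2 = F(w_2^*, \bar w_2^*) =: (\rho^*)^2 > 0$ is a.e.\ constant in $\tau_1$, and $\phi(z_1) := \theta(z_1, w_2^*)/\rho^*$ is a one-variable inner function. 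Specializing $w_2 = w_2^*$ in $F$ gives $\theta(\tau_1, z_2) = \psi(z_2)\phi(\tau_1)$ for a.e.\ $\tau_1$, with $\psi(z_2) := F(z_2, \bar w_2^*)/\rho^*$ holomorphic (being a scalar multiple of $z_2 \mapsto \theta(\tau_1, z_2)$ at any good $\tau_1$). Poisson extension in $z_1$ promotes this boundary identity to $\theta(z_1, z_2) = \phi(z_1)\psi(z_2)$ on $\mathbb{D}^2$, and Fubini applied to $|\theta(\tau_1, \tau_2)| = |\phi(\tau_1)||\psi(\tau_2)| = 1$ a.e.\ on $\mathbb{T}^2$ forces $|\psi(\tau_2)|=1$ a.e., so $\psi$ is inner.

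The main obstacle is technical rather than conceptual: one must justify the passage to the boundary limit $z_1, w_1 \to \tau_1$ in the Agler decomposition in a manner simultaneously valid for all $(z_2, w_2)\in \mathbb{D}^2$, which is exactly where the countable-density argument together with the holomorphic dependence on $(z_2, w_2)$ enters. Once a $\tau_1$-free slice-identity for $\theta$ on $\mathbb{T}\times \mathbb{D}$ is in hand, the remainder reduces to standard rearrangement and extension of boundary values.
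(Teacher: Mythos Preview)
Your proof is correct and follows the same overall strategy as the paper: for $(\Leftarrow)$, take the radial limit $z_1=w_1=r\tau_1\to\tau_1$ in the Agler decomposition so that the $K_2$ term drops out, obtain a $\tau_1$-independent slice identity $\theta(\tau_1,z_2)\overline{\theta(\tau_1,w_2)}=1-(1-z_2\bar w_2)K_1(z_2,\bar w_2)$, and then read off the one-variable factors.

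The execution differs in two technical respects, and yours is the more economical. First, to make the exceptional set of $\tau_1$ independent of $(z_2,w_2)$ you use a countable-density/normal-family argument; the paper instead fixes a single $w_2$, establishes the limit there, and then relies (implicitly via Cauchy--Schwarz for the positive kernel $K_2$, together with the bound $(1-r^2)K_2(r\tau_1,z_2,r\tau_1,z_2)\le 1$ coming from the Agler identity on the diagonal) to propagate the vanishing of the $K_2$ term from $(w_2,w_2)$ to $(z_2,w_2)$ for all $z_2$. Second, once the slice identity on $\mathbb{T}\times\mathbb{D}$ is in hand, you finish by invoking boundary uniqueness for $H^\infty$ in the $z_1$ variable at each fixed $z_2$; the paper takes a longer route, devoting a separate computation to identifying the radial-limit slice $\lim_{r\nearrow1}\theta(r\tau_1,\cdot)$ with the inner slice function $\theta_{\tau_1}$ (defined via boundary values $\theta(\tau_1,\tau_2)$) and then matching boundary values on $\mathbb{T}^2$. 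Your shortcut avoids that identification entirely. One small remark: the phrase ``Poisson extension'' is slightly off---both $\theta(\cdot,z_2)$ and $\phi(\cdot)\psi(z_2)$ are already $H^\infty$ functions, so what you are really using is boundary uniqueness rather than extension.
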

 \begin{proof} 
 We prove (a) $\Leftrightarrow$ (b). The equivalence of part (c) follows by symmetry.\\
 $(\Rightarrow)$ If $\theta(z) = \phi(z_1) \psi(z_2),$ then $(\Smax_1, \Smin_2)$ are reducing subspaces for $S_{z_1}$ and the calculations in Section \ref{s-exa} show that the associated Agler kernels are 
 \[ 
 \begin{aligned} K^{max}_1(z,w) &= \frac{1-\psi(z_2) \overline{\psi(w_2)}}{1-z_2 \bar{w}_2}; \\
 K^{min}_2(z,w) &= \frac{ \psi(z_2) \overline{\psi(w_2)} (1-\phi(z_1) \overline{\phi(w_1)})}{1-z_1 \bar{w}_1}\,.
 \end{aligned}
 \]
 Then, $K^{max}_1(z,w)$ is clearly a function of only $z_2$ and $\bar{w}_2$. Moreover, since $\phi$ is an inner function,
 \[ \lim_{r \nearrow 1} \left( 1- r^2 |\tau_1|^2 \right) K^{min}_2(r\tau_1, z_2, r\tau_1, w_2) =  \lim_{r \nearrow 1} \left( 1- r^2 |\tau_1|^2 \right) \frac{  \psi(z_2) \overline{\psi(w_2)} ( 1-| \phi( r \tau_1)|^2)}{1-r^2 |\tau_1|^2} =0 \]
 for every $z_1$ and $w_1$ and almost every $\tau_1 \in \mathbb{T}.$ 
 \\
 $(\Leftarrow$) Now assume $K_1(z,w)$ is a function of only $z_2$ and $\bar{w}_2$ and $K_2(z,w)$ satisfies \eqref{eqn:lim}. Abusing notation slightly, we write $K_1(z_2,w_2).$  We will construct inner functions $\phi$ and $\psi$ so that $\theta(z) = \phi(z_1) \psi(z_2).$ However, first we need several preliminary computations.\\
 
\textbf{Preliminary Computation 1:} We first show that there is a $w_2 \in \mathbb{D}$ such that $\lim_{r \nearrow 1} \theta(r \tau_1, w_2)$ exists and is nonzero for almost every $\tau_1$ in $\mathbb{T}.$  
 Fix an arbitrary $w_2\in \mathbb{D}$ and consider what happens when $\lim_{r \nearrow 1} \theta(r \tau_1, w_2) =0$ for some $\tau_1$ where \eqref{eqn:lim} holds. Then,
\[ 
\begin{aligned}
1 &= \lim_{r \nearrow 1} ( 1- |\theta(r \tau_1, w_2)|^2 ) \\
&= \lim_{r \nearrow 1} (1- r^2 |\tau_1|^2) K_2(r\tau_1, w_2, r\tau_1, w_2) + ( 1- |w_2|^2) K_1(w_2, w_2) \\
&= ( 1- |w_2|^2) K_1(w_2, w_2). 
\end{aligned}
\]
Rewriting \eqref{eqn:akernels} and setting $z_1 = w_1$ and $z_2 = w_2$ gives
\[
1 - |\theta(z_1,w_2)|^2 =  ( 1- |w_2|^2) K_1(w_2, w_2) + (1- |z_1|^2)K_2(z_1, w_2, z_1, w_2),
\]
and together we obtain
\[ | \theta(z_1,w_2)|^2  = -(1- |z_1|^2)K_2(z_1, w_2, z_1, w_2) \]
for every $z_1,$ which implies $\theta(\,\cdot\,,w_2)$ is identically zero. Therefore, if for every $w_2$, there is some $\tau_1$ satisfying \eqref{eqn:lim} such that $\lim_{r \nearrow 1} \theta(r \tau_1, w_2) = 0$, then $\theta \equiv 0$, which is a contradiction.  
Thus, there is some $w_2$ such that  $\lim_{r \nearrow 1} \theta(r \tau_1, w_2) \ne 0$ for any $\tau_1$ satisfying \eqref{eqn:lim}.  However, since,  $\theta(\cdot, w_2)$ is bounded and holomorphic, it is in $H_1^2(\mathbb{D})$ and so for that $w_2$
\begin{equation} \label{eqn:lim2} \lim_{r \nearrow 1} \theta(r \tau_1, w_2) \text{ exists and is nonzero} \end{equation} 
 for almost every $\tau_1.$ \\

\textbf{Preliminary Computation 2:}  Now, recall that, for almost every $\tau_1$ in $\mathbb{T}$, $\theta$ has boundary values at $(\tau_1, \tau_2)$ for almost every $\tau_2.$ Then, there is a well-defined inner function, which we call $\theta_{\tau_1}( z_2),$ that satisfies the following boundary conditions
 \[ \lim_{r \nearrow 1} \theta_{\tau_1}( r\tau_2) = \theta( \tau_1, \tau_2) \] 
 for almost every $\tau_2 \in \mathbb{T}.$ Fix such a $\tau_1$ and further, assume $\theta$ satisfies limit conditions  \eqref{eqn:lim} and \eqref{eqn:lim2} with $\tau_1.$  We will show that 
  \[ \Phi_{\tau_1}( z_2) \equiv \lim_{r \nearrow 1} \theta(r \tau_1, z_2) = \theta_{\tau_1}(z_2). \]
Consider the $w_2$ found earlier. Now we can use \eqref{eqn:lim}, \eqref{eqn:lim2}, and then \eqref{eqn:akernels}  with $z_1 = r\tau_1$ and $w_1 = r\tau_1$ to write:
\[ \Phi_{\tau_1}(z_2)= \lim_{r \nearrow 1 } \theta( r \tau_1, z_2) = \frac{1}{\overline{\Phi_{\tau_1}( w_2)}} \left( 1 - (1-z_2 \bar{w}_2) K_1(z_2,w_2) \right). \]
This shows that $\Phi_{\tau_1}(z_2)$ is a well-defined function in $H_2^2(\mathbb{D}).$ Moreover, rewriting \eqref{eqn:akernels}, one can obtain:
\[ \lim_{r \nearrow 1} \theta( r \tau_1, r \tau_2) = \frac{1}{ \overline{ \Phi_{\tau_1}(w_1)}}  \lim_{r \nearrow 1} \left( 1 - (1- r^2| \tau_1|^2) K_2(r\tau_1, r \tau_2, r\tau_1, w_2) - (1-r \tau_2 \bar{w}_2)
K_1(r\tau_2, w_2) \right). \]
Then, we can use Cauchy-Schwarz to calculate:
\[
 \begin{aligned}
 \lim_{r \nearrow 1} &\left( (1- r^2| \tau_1|^2) K_2(r\tau_1, r \tau_2, r\tau_1, w_2)  \right) \le  \lim_{r \nearrow 1}\left( ( 1- r^2| \tau_1|^2) K_2(r\tau_1, w_2, r\tau_1, w_2) \right)^{1/2} \\
\hspace{1in}  &  \cdot \left(  (1- r^2| \tau_1|^2) K_2(r\tau_1, r\tau_2, r\tau_1, r\tau_2) \right)^{1/2} =0,
\end{aligned}
\]
since the first term is tending to zero by \eqref{eqn:lim} and the second term is bounded for almost every $\tau_2$. This implies that for almost every $\tau_2$ in $\mathbb{T}$, 
\[  \theta(\tau_1, \tau_2) = \lim_{r \nearrow 1} \theta( r \tau_1, r \tau_2) = \frac{1}{ \overline{ \Phi_{\tau_1}(w_2)}} \left( 1-  \lim_{r \nearrow 1} (1-r \tau_2 \bar{w}_2)
K_1(r\tau_2, w_2) \right) = \lim_{r \nearrow 1}  \Phi_{\tau_1}( r \tau_2). \]
Then, since $\Phi_{\tau_1}$ and $\theta_{\tau_1}$ are both $H_2^2(\mathbb{D})$ with the same boundary values, they must be equal. \\

\textbf{Construction of $\phi$ and $\psi$:} Now, fix any $\mu_1$ in $\mathbb{T}$ that satisfies the limit conditions \eqref{eqn:lim} and \eqref{eqn:lim2} with $\theta_{\mu_1}(z_2)$ well-defined and inner. Then, we can conclude that for every $\tau_1$ satisfying the inner function and limit conditions (by assumption, this is almost every $\tau_1$), 
\[ \theta_{\tau_1}(z_2) \overline{ \theta_{\tau_1}(w_2)} = 1 - (1-z_2 \bar{w}_2) K_1(z_2,w_2) = 
 \theta_{\mu_1}(z_2) \overline{ \theta_{\mu_1}(w_2)} \]
and so, taking boundary limits, for almost every $\tau_2$ in $\mathbb{T}$, 
\[ \theta(\tau_1, \tau_2)   =\theta(\mu_1, \tau_2)  \frac{\overline{ \theta_{\mu_1}(w_2)} }{\overline{ \theta_{\tau_1}(w_2)}}.  \]
Since $\theta_{\tau_1}$ and $\theta_{\mu_1}$ are both inner, this implies that 
\[  \left | \frac{\overline{ \theta_{\mu_1}(w_2)} }{\overline{ \theta_{\tau_1}(w_2)}} \right | =1, \]
and so we can obtain:
\[  \theta(\tau_1, \tau_2)  =\theta(\mu_1, \tau_2)  \frac{\theta_{\tau_1}(w_2) }{ \theta_{\mu_1}(w_2)} . \]
Define $\psi(z_2) = \theta_{\mu_1}(z_2)$ and $\phi(z_1) =  \frac{ \theta(z_1, w_2)}{\theta_{\mu_1}(w_2)}.$
 Then, $\psi(z_2)\phi(z_1)$ is a product of one variable inner functions and for almost every $\tau_1, \tau_2$, we have
\[ \lim_{r \nearrow 1}\psi(r \tau_2)\phi( r \tau_1) = \lim_{r \nearrow 1}  \theta_{\mu_1}( r\tau_2)\frac{ \theta(r \tau_1, w_2)}{\theta_{\mu_1}(w_2)} =\theta(\mu_1, \tau_2)  \frac{\theta_{\tau_1}(w_2) }{ \theta_{\mu_1}(w_2)} =  \theta(\tau_1, \tau_2),\]
 which implies that $\theta(z_1,z_2) = \phi(z_1) \psi(z_2).$
 \end{proof}

\subsection{Proof of Theorem \ref{t-iff}}
We combine Theorems \ref{thm:Reducing1} and \ref{thm:Reducing2} to prove Theorem \ref{t-iff}, which we state again for the convenience of the reader.

\noindent \textbf{Theorem \ref{t-iff}.} \emph{Let $\theta$ be a rational inner function on $\mathbb{D}^2.$ Then $\theta$ has a pair of Agler kernels $(K_1, K_2)$ such that the associated Agler spaces
 \[ \mathcal{S}_1 \equiv \mathcal{H} \left(\frac{K_1(z,w)}{1-z_1\bar{w}_1} \right) \ \text{ and } \ \mathcal{S}_2 \equiv\mathcal{H} \left(\frac{K_2(z,w)}{1-z_2\bar{w}_2} \right)  \]
 are reducing subspaces for $S_{z_1}$ if and only if $\theta$ is a product of two one variable inner functions. By symmetry, this occurs if and only if  $\theta$ has a pair of Agler kernels $(\widetilde{K}_1, \widetilde{K}_2)$ such that the associated Agler spaces are reducing subspaces for $S_{z_2}$.}

\begin{proof}[Proof of Theorem \ref{t-iff}]
$(\Leftarrow)$ If $\theta$ is a product of one variable inner functions, then $\theta$ has such reducing subspaces by Proposition \ref{p-representation}. 

$(\Rightarrow)$ Assume $\theta$ is rational inner and possesses reducing subspaces of $S_{z_1}$ with kernels $(K_1,K_2)$. Then by Theorem \ref{thm:Reducing1}, $K_1(z,w)$ is a function of only $z_2$ and $\bar{w}_2.$ Now, we use the structure of rational inner functions to show that $K_2$ satisfies the limit condition \eqref{eqn:lim}. Indeed, by Lemma 9.1 in \cite{bk13}, we can write $\theta = \tilde{p} /p$, where $p$ has only finitely many zeros on $\mathbb{T}^2.$ By Theorem 2.8 in \cite{knese10}, we can write
\[ K_2(z,w) = \sum_{i=1}^N f_i(z) \overline{f_i(w)} = \sum_{i=1}^N \frac{ q_i(z)}{p(z)} \frac{ \overline{q_i(w)}}{\overline{p(w)}}, \]
where the $q_i$ are polynomials and $N$ only depends on $\deg \theta.$ Fix any $f_i$ as above and $\tau_1$ in $\mathbb{T}$. Then by Proposition 4.9.1 in \cite{Rud69}, if $f_i(z)$ has a singular point at $(\tau_1, z_2)$ for any $z_2$ in $\mathbb{D}$, then $f_i(z)$ has a singular point at $(\tau_1, \tau_2)$ for every $\tau_2$ in $\mathbb{T}.$ However, this cannot happen, as  the singular points of $f_i$ must occur at the zeros of $p$ and $p$ has only finitely many zeros on $\mathbb{T}^2.$ Thus, every $(\tau_1, z_2)$ must be a regular point of $f_i$, namely, $f_i$ must extend analytically to a neighborhood of $(\tau_1, z_2).$  Specifically, this means that for each fixed $z_2$ in $\mathbb{D},$
\[ c_i(\tau_1, z_2 ) \equiv  \lim_{r \nearrow 1} f_i( r \tau_1, z_2) \]
exists. Thus,
\[ \lim_{r \nearrow 1} K_2(r \tau_1 z_2, r\tau_1, w_2) = \sum_{i=1}^N c_i(\tau_1, z_2) \overline{c_i(\tau_1, w_2)}. \]
This implies that
\[ \lim_{r \nearrow 1} (1-|r \tau_1|^2) K_2( r \tau_1, z_2, r\tau_1, w_2) = \lim_{r \nearrow 1} (1-|r \tau_1|^2)\sum_{i=1}^N c_i(\tau_1, z_2) \overline{c_i(\tau_1, w_2)} =0,\]
for all $z_2, w_2,$ and $\tau_1.$  Thus, we can apply Theorem \ref{thm:Reducing2} to conclude that $\theta$ must be a product of one variable inner functions.
\end{proof}
\subsection{Open questions} There are also many related questions yet to be answered about reducing subspaces.  For example:\\
(a) Theorem \ref{t-iff} only characterizes reducing subspaces if $\theta$ is rational inner. This leads to the question:~Does Theorem \ref{t-iff} generalize to arbitrary inner functions? Namely,
\begin{center}
Do $S_{z_1}$ or $S_{z_2}$ have reducing Agler subspaces if and only if $\theta$ is a product of two one variable inner functions?
\end{center}
(b) Unfortunately, our tools only allowed us to answer questions about when Agler subspaces are reducing. This leaves open the difficult but attractive question:
\begin{center} For which $\theta,$ does $S_{z_1}$ or $S_{z_2}$ have reducing subspaces in $\mathcal{K}_{\theta}?$ \end{center}

\end{document}